  \numberwithin{equation}{section} 
  \date{}
\DeclareMathOperator{\R}{\mathbb{R}}
\DeclareMathOperator{\Q}{\mathbb{Q}}
\DeclareMathOperator{\Z}{\mathbb{Z}}
\DeclareMathOperator{\N}{\mathbb{N}}
\DeclareMathOperator{\C}{\mathbb{C}}
\DeclareMathOperator{\De}{d}
\DeclareMathOperator{\cyp}{cyp}
\renewcommand{\c}{\mathrm{c}}
\newcommand{\e}{\mathrm{e}}
\newcommand{\f}{\frac}  
\newcommand{\eps}{\varepsilon}
\newcommand{\vr}{\varphi}
\newcommand{\bxi}{\overline{\xi}}
\newcommand{\iu}{{\mathrm{i}\mkern1mu}} 
\newcommand{\cA}{\mathcal{A}}
\newcommand{\F}{\mathbb{F}}
\newcommand{\GA}{\mathcal{GA}}
\newcommand{\D}{\mathcal{D}}
\newcommand{\boxi}{\boldsymbol{\xi}}
\newcommand{\boxbxi}{\boldsymbol{\overline{\xi}}}
\newcommand{\boxzi}{\bf Z}
\newcommand{\boxbzi}{\overline{\bf Z}}
\newcommand{\Diff}{\D(\boxi,\,\boxbxi)}
\newcommand{\Diffz}{\D(\boxzi,\,\boxbzi)}
\newcommand{\boxx}{\boldsymbol{x}}
\newcommand{\boxy}{\boldsymbol{y}}
\newcommand{\prob}{\mathbf{P}}
\newcommand{\E}{\mathbf{E}} 
\newcommand{\Exp}{\mathrm{E}} 
\theoremstyle{plain} 
    \newtheorem{theorem}{Theorem}
    \numberwithin{theorem}{section} 
    \newtheorem{lemma}[theorem]{Lemma}
    \newtheorem{proposition}[theorem]{Proposition}
    \newtheorem{corollary}[theorem]{Corollary}
\theoremstyle{definition} 
    \newtheorem{definition}[theorem]{Definition}
    \newtheorem{remark}[theorem]{Remark}
    \newtheorem{example}[theorem]{Example}
\newcommand{\mat}[1]{\left(\begin{matrix}#1\end{matrix} \right)} 
 \def\paragraph{\@startsection{paragraph}{4}%
 \z@\z@{-\fontdimen2\font}%
   {\normalfont\itshape}}\makeatother
\title{Grassmannian calculus for probability}
\author[S. Baldassarri]{Simone Baldassarri}
\address{Leiden University, Mathematical Institute, Einsteinweg 55, 2333 CC Leiden, The Netherlands}
\email{s.baldassarri@math.leidenuniv.nl}
\thanks{}
\author[A. Cipriani]{Alessandra Cipriani}
\address{UCL, Department of Statistical Science, 1-19 Torrington Place, London, WC1E 7HB, United Kingdom}
\email{a.cipriani@ucl.ac.uk}
\date{December 2024}
\begin{document}

\begin{abstract}
   The present overview and gentle introduction to Grassmannian calculus and some of its applications to probability collects the
notes of a mini-course given by the authors at the Brazilian School of Probability, August 5-9, 2024, in Salvador, Bahia, Brazil. The content is by no means comprehensive, and is a personal summary and interpretation of results and applications of this interesting area of research. 
\end{abstract}
\maketitle

\section{Introduction}

Grassmannian variables, originating from the work of Hermann Grassmann~\cite{lewis2005hermann}, form a critical part of modern mathematics, particularly in algebraic geometry, combinatorics, and physics. In the early 20th century, Grassmannian variables began to see applications beyond pure geometry, finding a place in quantum field theory and theoretical physics. In physics terms, they provide a mathematical framework to accommodate variables satisfying Pauli exclusion principle. Therefore they are crucial for describing systems with both bosonic (commuting) and fermionic (anticommuting) components. Their role in supersymmetry became crucial, and this development laid the groundwork for their eventual integration into probabilistic and combinatorial models, where their algebraic properties could be used to describe complex processes. These applications leveraged the unique properties of Grassmann calculus, especially the way in which ``it permits the expression in formulas of the results of geometric constructions''~\cite{peano1999geometric}. Grassmannian structures emerge in the analysis of determinants, particularly when using algebraic methods to compute the probability distributions associated with spanning trees. By representing the set of all possible spanning trees as points in a Grassmannian, one can explore the geometric and algebraic properties of these trees, such as their symmetries and invariants. This approach also extends to the study of forests (disconnected trees) and uniform spanning forests, which generalize the concept of uniform spanning trees (USTs) to infinite graphs. 

These notes are primarily concerned with the application of Grassmann calculus to stochastic models related to USTs, however it is worthwhile mentioning that other areas in which Grassmannian calculus has been applied range from random matrix theory to lattice models, such as dimer models and the Ising model, and renormalization theory~\cite{bauerschmidt2019introduction,wegner2016supermathematics}, just to name a few.

\section{A motivation: the Abelian sandpile model}
One of the intriguing applications of Grassman calculus lies in its link to combinatorial models such as the aforementioned USTs and the Abelian Sandpile Model (ASM). They are both deeply connected to self-organized criticality and the broader study of complex random systems.

The ASM is also known as the Bak-Tang-Wiesenfeld model~\cite{bak1987self, 10.1214/14-PS228,redig2006mathematical}. It is a type of cellular automaton defined on a graph, typically a grid, where each cell (or vertex) can hold a certain number of grains of sand. In this model, height fields refer to the number of sand grains at each vertex of the graph. Each vertex $i$ has an associated height $h_i$, which is an integer representing the number of grains at that vertex. The configuration of the entire system is given by the set of heights at all vertices. The model follows a dynamics in three steps. Firstly, grains of sand are added one at a time to randomly chosen vertices. Secondly, topplings may occur. If the height at any vertex exceeds a certain threshold $k_{\text{thresh}}$, that vertex topples, distributing one grain of sand to each of its neighboring vertices. This can cause neighboring vertices to exceed their thresholds and topple as well, leading to a cascade of topplings, known as an avalanche. Finally, the sandpile stabilizes, meaning this process continues until all vertices are below the threshold, resulting in a stable configuration. 

One of the key features of the Abelian sandpile model is its Abelian property. This means that the final stable configuration of the sandpile does not depend on the order in which the grains are added or the order in which the vertices topple. This property simplifies the analysis and allows for exact results in many cases. Despite this and its apparently simple description, studying the ASM presents several notable challenges. Firstly, the system exhibits multifractal scaling rather than simple finite-size scaling. This multifractality is a hallmark of systems exhibiting self-organized criticality. Additionally, the height fields exhibit elaborate, non-local correlations. While the height-one variables can be handled by local calculations thanks to the burning bijection~\cite{Majumdar1992Jun}, higher height variables involve more intricate interactions. Finally,
in 2D large avalanches, though rare, dominate the statistics in the thermodynamic limit. These rare events can significantly affect the height field distributions and their correlations. These factors make the study of height fields in the Abelian sandpile model a rich and challenging area of research.

The burning bijection relates the stationary measure of the ASM on a graph to the uniform spanning tree measure of the same graph. A spanning tree of a finite connected graph $G$ is a subgraph which has no loops and connects via its edges all points of $G$. Kirchhoff's theorem gives the number of spanning trees in this setup, giving rise to a uniform probability measure on all such trees, called the uniform spanning tree. This allows one to give an alternative description of many observables of the ASM. For example, for a suitable collection of vertices $V$ the event $\{\deg_{\text{UST}}(v)=1, \forall v \in  V\}$ is equivalent to $\{h_v=1, \forall v \in V\}$, and that the UST is incident to $v$ via a preferred edge $\eta(v)$ (see Figure~\ref{fig:UST_att}). 
\begin{figure}
    \centering
\includegraphics[width=0.5\linewidth]{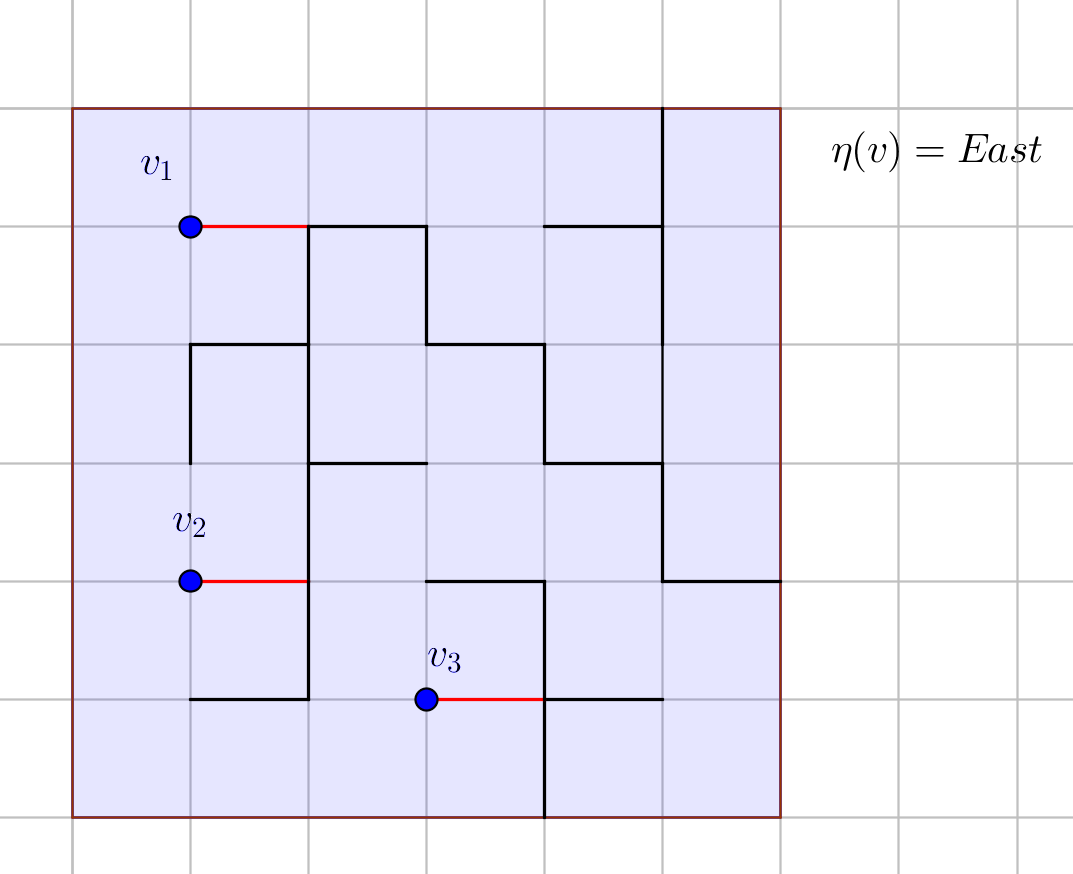}
    \caption{The event $\displaystyle h(v_1)=h(v_2)=h(v_3)=1$ in ASM is equivalent to the UST incident to $v_i$ via the east neighbor.}
    \label{fig:UST_att}
\end{figure}
This means that the height-one field is a local event, that is, an event which is measurable with respect to a finite number of edges only, even as we let the size of the UST grow. This makes it amenable to exact computations than higher heights, which in contrast are non-local~\cite{10.1214/14-PS228}.
The perhaps surprising fact is that this 0-1 field is closely related to a special Gaussian field: the {\em discrete Gaussian free field} (DGFF).

\subsection{ASM and DGFF} In~\cite{durre,cipriani2022properties} an interesting relation between the height-one field of the ASM and the discrete Gaussian free field was unveiled. In order to explain it, we first need to introduce the DGFF~\cite[Chapter 2]{Szn}. The DGFF is a Gaussian vector indexed over a finite connected graph. In~\cite{durre,cipriani2022properties}, this graph is a subset of $\Z^d$, although extensions to many other graphs are possible. Let us call this graph $G=(V,\,E)$ with its vertex set $V$ and edge set $E$. On it, we define the Laplacian matrix $\Delta$ as follows: for $i,\,j\in V$
$$\Delta(i,\,j)=\begin{cases}
1& \text{if } \|i-j\|=1,\\
-2d & \text{if } i=j,\\
0 &\text{otherwise.}
\end{cases}$$
Finally, we identify the boundary of the graph $G$ as a subset $\emptyset\neq C\subset V$ of vertices. With these notions, we have the following definition.
\begin{definition}[Discrete Gaussian free field]
The discrete Gaussian free field (DGFF) $\varphi$ on $G$ with zero-boundary conditions on $C$ is the mean-zero multivariate Gaussian indexed on $V$ with density (with respect to the product Lebesgue measure on $\R^{V\setminus C}$) proportional to
\[
\exp\left(-\f12\sum_{i,\,j\in V\setminus C}\f1{2d}\varphi_i(-\Delta(i,\,j))\varphi_j\right).
\]  
For simplicity of notation we now denote the height-one field by $h$. In~\cite{cipriani2022properties} one considers under a suitable rescaling procedure a graph $G_\eps=(V_\eps,\,E_\eps)\subseteq\Z^2$, where both the height-one $h_\eps$ and the DGFF $\vr_\eps$ are defined. One studies then the $\ell$-joint cumulants of first order $\kappa$ of $h$. What one finds is that, when $x^{(1)}_\eps,\,\ldots,\,x^{(\ell)}_\eps$ is a set of $\ell\ge 2$ pairwise distinct points, in the limit $h_\eps$ satisfies
\begin{equation}\label{eq:equal_cumulants}
   \lim_{\eps\to 0} \eps^{-2 \ell}\kappa\left(h_\eps\big(x_{\eps}^{(1)} \big), \dots, h_\eps\big(x_{\eps}^{(\ell)}\big)\right)=-4\ \lim_{\eps\to 0} \eps^{-2 \ell}\kappa\left(C\Phi_\eps\big(x^{(1)}_\eps \big), \dots, C\Phi_\eps\big(x^{(\ell)}_\eps\big)\right) 
\end{equation}
with $C$ a universal explicit constant, and
\[
\Phi_\eps(v)\coloneqq\sum_{i=1}^{2}\left(\vr_\eps(v+e_i)-\vr_\eps(v)\right)^2.
\]
The field $\Phi_\eps$ can be though of as the {\em gradient squared} of the DGFF. One of the goals of these notes is to elucidate the relation between the ``squared norm'' of the DGFF, and the height-one field.
\end{definition}

\subsection{Outline} These notes are organized as follows. Section~\ref{sec:not} recalls some basic probability facts and sets the notation. In Section~\ref{sec:gras} we give the basics of Grassmann calculus, which will be used in Section~\ref{sec:Gauss} to derive properties on Grassmannian ``Gaussians'' and in Section~\ref{sec:ust} on the uniform spanning tree. We conclude with Section~\ref{sec:susy} where we will explain briefly the concept of supersymmetry. 

\section{Mathematical preliminaries}\label{sec:not}
\subsection{Notation} We denote $\N_0\coloneqq\{1,\,2,\,\ldots\}$. We write $[n]\coloneqq\{1,\,2,\,\ldots,\,n\}$ for $n\in \N_0$. We will use boldfonts to denote vectors (for example, $\boxx=(x_1,\,\ldots,\,x_n)$). The set of $n\times n$ square matrices with entries in a field $\F$ is called $M_n(\F)$. The cardinality of a set $A$ is denoted as $\#A$. Given $a\in\C$, we denote by $\Re(a)$ and $\Im(a)$ the real and imaginary part of $a$, respectively.
\medskip
\paragraph{\underline {Matrices}}
\begin{definition}[Positive-(semi)definite matrix] A matrix $A\in M_n(\C)$ is positive-(semi)de\-fi\-ni\-te if \[{\Re}(\overline{\boxx}^T A\boxx)>0\](resp. $\geq0$) for any $\boxx\in\C^n\setminus\{\bf{0}\}$, where $\overline{\boxx}^T$ denotes the transpose complex conjugate of $\boxx$. A similar definition applies to $A\in M_n(\R)$ by replacing $\overline{\boxx}^T$ with $\boxx^{T}$, that is, the transpose of $\boxx$.
    \begin{definition}[Hermitian matrix] A matrix $A\in M_n(\C)$ is Hermitian if $A=\overline{A}^T$, that is, for all $1\leq i,\,j\leq n$ one has $\displaystyle A(i,\,j)=\overline{A(j,\,i)}.$
        
    \end{definition}
\end{definition}
Let $n\in\N$ and $\mathbf X=(X_{i})_{i=1}^n$ be a vector of real-valued random variables, each of which has all finite moments.

\medskip
\paragraph{\underline{Cumulants}}
For a reference on this paragraph see for example~\cite{peccati2011wiener}.
\begin{definition}[Joint cumulants of random vectors]\label{def:cum_mult}
The cumulant generating function $K(\mathbf t)$ of $\mathbf X$ for $\mathbf t=(t_1,\,\dots,\,t_n)\in \R^n$ is defined as 
\[
K(\mathbf t) \coloneqq \log \left ( \mathbb{E}\big[\exp\left( \mathbf{t} \cdot \mathbf{X}\right)\big]\right ) = \sum_{\mathbf m\in\N^n} \kappa_{\mathbf m}(\mathbf X) \prod_{j=1}^n \frac{t_j^{m_j}}{m_j!} \ ,
\]
where $\mathbf t\cdot \mathbf X$ denotes the scalar product in $\R^n$, $\mathbf m=(m_1,\,\dots,\,m_n)\in\N^n$ is a multi-index with $n$ components, and
\[
\kappa_{\mathbf m}(\mathbf X)=\frac{\partial^{|m|}}{\partial t_1^{m_1}\cdots \partial t_n^{m_n}}K(\mathbf t)\Big|_{t_1=\ldots=t_n=0} \ ,
\]
being $|m|=m_1+\cdots+m_n$. 
\end{definition}
The joint cumulant of the components of $\mathbf X$ can be defined as a Taylor coefficient of $K(\mathbf t)$ for $\mathbf m=(1,\,\ldots,\,1)$; in other words
\[
\kappa(\mathbf X)=\frac{\partial^n}{\partial t_1\cdots \partial t_n} K(\mathbf t)\Big|_{t_1=\ldots=t_n=0} \ .
\]
In particular, for any $A\subseteq [n]$, the joint cumulant $\kappa(X_i:\, i\in A)$ of $\mathbf X$ can be computed as 
\[
\kappa(X_i:\, i\in A) = \sum_{\pi \in \Pi(A)} (|\pi|-1)! (-1)^{|\pi|-1} \prod_{B\in \pi} \mathbb{E} \left[\prod_{i\in B} X_i \right] \ ,
\]
with $\Pi(A)$ the set of partitions of the set $A$ and $|\pi|$ the cardinality of $\pi$.
Let us remark that, by some straightforward combinatorics, it follows from the previous definition that
\begin{equation}\label{eq:mom_to_cum}
\mathbb{E} \left[\prod_{i\in A} X_i \right] = \sum_{\pi \in \Pi(A)} \prod_{B\in \pi} \kappa(X_i:\, i\in B) \ .
\end{equation}
If $A=\{i,\,j\}$, $i,\,j\in[n]$, then the joint cumulant $\kappa(X_i,\,X_j)$ is the covariance between $X_{i}$ and $X_{j}$. In addition, for a real-valued random variable $X$, one has the equality
\[
\kappa(\underbrace{X,\,\ldots,\,X}_{n\text{ times}})=\kappa_n(X),\quad n\in \N,
\]
which we call the \emph{$n$-th cumulant of $X$}.
\section{Grassmannian calculus}\label{sec:gras}
For this Section we are indebted to~\cite{abdesselam}.

Let $\F$ be a field which contains the field $\Q$ of rational numbers.
\begin{definition}[Associative algebra]
An associative algebra $\cA$ over a field $\F$ is a vector space over $\F$ with an operation $\wedge$ satisfying the following conditions: for all $x,\,y,\,z\in \cA,\,a\in \F$ one has
\begin{description}
    \item[Left distributivity] $x\wedge(y+z)=x\wedge y+x\wedge z.$
    \item[Right distributivity] $(x+y)\wedge z=x\wedge y+y\wedge z. $
    \item[Compatibility] $a(x\wedge y)=(ax)\wedge y=x\wedge(ay). $
    \item[Associativity] $(x\wedge y)\wedge z=x\wedge(y\wedge z).$
\end{description}
\end{definition}
\begin{example}
    An example of an associative algebra $(\cA,\,\wedge)$ is given by the $n\times n$ matrices $M_n(\R)$ or $M_n(\C)$ with the usual matrix multiplication.
\end{example}
For the rest of the notes, we will omit the symbol $\wedge$ unless there is a risk of confusion with other operations.
In our applications, $\F$ will typically be the set of real numbers $\R$ or the complex numbers $\C$, and we will not work with any other field of numbers.
\begin{definition}[Generators of $\cA$]
    The set $\{\xi_i:\,i\in I\}$ forms a set of generators for $\cA$ if, for all $x\in\cA$, there exists $n\in \N$ such that
    \[
    x=\mathrm{pol}_\F(\xi_{i_1},\,\,\ldots,\,\xi_{i_n}),\quad i_j\in I,\,1\leq j\leq n.
    \]
    In other words every element of the algebra can be written as a finite polynomial in the $\xi's$.
\end{definition}
We are now ready to define a Grassmann algebra.
\begin{definition}
    A Grassmann algebra $\GA$ is an associative algebra over $\R$ (or $\C$) generated by a set of generators $\{\xi_i:\,i\in I\}$ that satisfy the following anticommutation relations:
    \begin{equation}\label{eq:AC}
        \xi_i\xi_j=-\xi_j\xi_i.
    \end{equation}
\end{definition}
\begin{corollary}[Nilpotency of the $\xi$'s]\label{cor:nil}
Since the base field of the algebra contains $\Q$, and in particular contains $\nicefrac{1}{2}$, it follows that
\[
\xi_i^2=0,\,\quad i\in I.
\]
\end{corollary}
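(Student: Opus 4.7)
The plan is to specialize the anticommutation relation \eqref{eq:AC} to the diagonal case $i=j$ and then use the invertibility of $2$ in the base field.

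First I would set $i=j$ in the defining relation $\xi_i\xi_j=-\xi_j\xi_i$. This immediately yields $\xi_i^2=-\xi_i^2$, which after rearranging gives $2\xi_i^2=0$ as an identity in $\GA$.

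The second step is to use the hypothesis that $\F\supseteq\Q$. Since $\nicefrac{1}{2}\in\F$ is a scalar in the underlying vector space, I can multiply the identity $2\xi_i^2=0$ by $\nicefrac{1}{2}$ using the compatibility axiom of the associative algebra, obtaining $\xi_i^2=0$ for every $i\in I$.

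There is no real obstacle here; the only point worth flagging is \emph{why} the assumption on the base field is essential. In characteristic $2$ (for instance, $\F=\mathbb{F}_2$), the relation $2\xi_i^2=0$ would be automatic and carry no information, so nilpotency of the generators would have to be imposed as a separate axiom rather than derived. The hypothesis $\Q\subseteq\F$ is exactly what rules this out and makes the derivation a one-line consequence of \eqref{eq:AC}.
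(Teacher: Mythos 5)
Your argument is correct and is exactly the one the paper intends (the corollary is stated without a separate proof, but the hint about $\nicefrac{1}{2}\in\F$ points to precisely this derivation): setting $i=j$ in \eqref{eq:AC} gives $2\xi_i^2=0$, and multiplying by $\nicefrac{1}{2}$ yields $\xi_i^2=0$. Your remark on why characteristic $2$ must be excluded is a nice, accurate addition but not a deviation from the paper's approach.
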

\begin{example}
    An example of a Grassmannian algebra is the algebra of differential forms on an $n$-dimensional manifold with the operation $\wedge$ defined as the standard wedge product. To show this, let $V$ be a $n$-dimensional manifold with coordinates $x_1,\, \ldots ,\,x_n$. A differential form on $V$ can then be written as
    \[
    F = F_0 +\cdots+ F_n,
    \]
    where $F_0\in C^{\infty}(V)$ is a $0$-form, i.e., an ordinary function from $V$ to $\R$, and $F_i$ is an $i$-form for any $i\in[n]$, i.e., 
    \[
    F_i(x) = f(x) \De x_{i_1} \wedge \cdots \wedge \De x_{i_n},
    \]
    with $x\in V$ and $f\in C^{\infty}(V)$. The form $F_p$ is the degree-$p$ part of $F$ and a form $F$ has degree $p$ if $F=F_p$. The set $\{\De x_i:\,i=1,...,n\}$ forms a set of generators for this algebra. To conclude the example and show that this is indeed a Grassmann algebra, note that $\De x_i \wedge \De x_j = - \De x_j \wedge \De x_i$ for any $i,j\in[n]$.
\end{example}
Corollary~\ref{cor:nil} entails an important property: all elements of a Grassmann algebra are affine polynomials in the $\xi$'s, as we shall see now.
\begin{definition}[Even and odd, parity]\label{def:evenodd}
A non-zero monomial $f\in \GA$ is called even if it contains an even number of generators, and it is called odd otherwise. We define the parity of such a monomial as
\[
p(f)\coloneqq\begin{cases}
    0 & \text{if }f\text{ is even},\\
    1 & \text{if }f\text{ is odd}.
\end{cases}
\]
We can extend the definition to even resp. polynomials whose monomials are all even resp. odd.
\end{definition}
\begin{lemma}[Graded commutation relations]\label{lem:swapping}
For all non-zero monomials $f,\,g\in \GA$ one has
\[
fg=(-1)^{p(f)p(g)}gf.
\]
\end{lemma}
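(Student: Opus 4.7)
My plan is to reduce the statement to a counting of pairwise swaps of generators, using the defining anticommutation relation \eqref{eq:AC} as the only nontrivial ingredient.

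\textbf{Step 1 (normal form).} By Corollary~\ref{cor:nil} any non-zero monomial can be written, up to a scalar from $\F$ (which is central and hence harmless), as a product of \emph{distinct} generators. So I would write
\[
f = \alpha\,\xi_{i_1}\xi_{i_2}\cdots\xi_{i_m},\qquad g = \beta\,\xi_{j_1}\xi_{j_2}\cdots\xi_{j_n},
\]
with $\alpha,\beta\in\F\setminus\{0\}$. By Definition~\ref{def:evenodd}, $p(f)\equiv m\pmod 2$ and $p(g)\equiv n\pmod 2$, so the target identity $(-1)^{p(f)p(g)}=(-1)^{mn}$ is what I actually need to establish.

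\textbf{Step 2 (single-generator case).} I would first show, by induction on $m$, that for any single generator $\xi_j$
\[
f\,\xi_j=(-1)^{m}\,\xi_j f.
\]
The base $m=0$ is trivial, and the inductive step is immediate from \eqref{eq:AC}: if the claim holds for $\xi_{i_2}\cdots\xi_{i_m}$, then
\[
\xi_{i_1}(\xi_{i_2}\cdots\xi_{i_m})\xi_j
=(-1)^{m-1}\xi_{i_1}\xi_j(\xi_{i_2}\cdots\xi_{i_m})
=(-1)^{m}\xi_j\xi_{i_1}(\xi_{i_2}\cdots\xi_{i_m}).
\]
The scalar $\alpha$ plays no role since it lies in the base field.

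\textbf{Step 3 (induction on $n$).} With Step~2 as base case, I would induct on the number $n$ of generators of $g$. Assume the identity $fg'=(-1)^{m(n-1)}g'f$ for any monomial $g'$ with $n-1$ generators, and write $g=\beta\,g'\xi_{j_n}$. Then
\[
fg
=\beta\,(fg')\xi_{j_n}
=\beta\,(-1)^{m(n-1)}\,g'\,(f\xi_{j_n})
=\beta\,(-1)^{m(n-1)}(-1)^{m}\,g'\xi_{j_n}f
=(-1)^{mn}\,gf,
\]
where the last use of $(-1)^m$ comes from Step~2.

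\textbf{Step 4 (parity conversion).} Finally I would observe that $(-1)^{mn}=(-1)^{(m\bmod 2)(n\bmod 2)}=(-1)^{p(f)p(g)}$, since $mn$ is odd precisely when both $m$ and $n$ are odd. This gives the claimed formula.

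The only real content is Step~2, i.e.\ the fact that pushing a single generator through a word of length $m$ costs $(-1)^m$; once that is in hand, everything else is bookkeeping. I do not foresee any genuine obstacle: the lemma is essentially a formalization of the intuition ``each pair of generators swapped contributes a sign, and there are $mn$ such pairs.''
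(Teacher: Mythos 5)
Your argument is correct and is essentially the paper's own proof: the paper argues informally that moving each of the generators of $g$ past the generators of $f$ costs a sign per crossing, for a total of $(-1)^{p(f)p(g)}$, and your double induction (Step 2 plus Step 3) is just a careful formalization of that same swap-counting, with Step 4 the same parity bookkeeping. No gaps; the distinctness of generators in your Step 1 is not even needed, since \eqref{eq:AC} holds for equal indices as well.
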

\begin{proof}
To swap the order of the terms $fg$, one has to bring each generator appearing in $g$ to the front of $f$, yielding a $(-1)^{p(g)}$ for each generator. In addition, each generator in $g$ has to perform a number of swaps equal to the number of factors of $f$, gaining an additional $(-1)^{p(f)}$.
\end{proof}
From Lemma~\ref{lem:swapping} we readily obtain two important consequences. The first is that even polynomials commute, while odd ones anticommute. This gives rise to the nomenclature ``bosons'' for even terms in the algebra and ``fermions'' for the odd ones. Secondly, it allows us to prove immediately Pauli exclusion principle, which we phrase as follows.
\begin{proposition}[Pauli exclusion principle]
If $f$ is an odd element of $\GA$ then $f^2=0$.
\end{proposition}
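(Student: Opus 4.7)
The plan is to reduce the statement to the graded commutation relation of Lemma~\ref{lem:swapping}, and then exploit the fact that the base field contains $\Q$ to divide by $2$. The subtlety is that Lemma~\ref{lem:swapping} is phrased for monomials, whereas ``odd element'' in Definition~\ref{def:evenodd} allows $f$ to be a polynomial, so I first need to extend the anticommutation to odd polynomials.

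Concretely, I would write $f=\sum_{i=1}^N f_i$ where each $f_i$ is a nonzero odd monomial (by hypothesis on $f$). Then
\[
f^2=\sum_{i=1}^N f_i^2+\sum_{1\leq i<j\leq N}\left(f_if_j+f_jf_i\right).
\]
For $i\neq j$, both $f_i$ and $f_j$ are odd monomials, so $p(f_i)=p(f_j)=1$, and Lemma~\ref{lem:swapping} gives $f_if_j=(-1)^{1\cdot1}f_jf_i=-f_jf_i$, hence each cross term $f_if_j+f_jf_i$ vanishes. It remains to deal with the diagonal terms $f_i^2$: applying Lemma~\ref{lem:swapping} with $g=f_i$ yields $f_i^2=-f_i^2$, i.e. $2f_i^2=0$. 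Since $\F\supseteq\Q$ contains $\nicefrac{1}{2}$, we conclude $f_i^2=0$ for every $i$, and therefore $f^2=0$.

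The only step that requires a sentence of justification is invoking Lemma~\ref{lem:swapping} with $f_i$ in both slots; but inspecting its proof one sees that the argument (shuffling the generators of one monomial past the other) is valid when the two monomials coincide, so there is no circularity. I do not anticipate any real obstacle: Corollary~\ref{cor:nil} is essentially the special case $f=\xi_i$, and the Proposition is the natural generalization obtained by the same division-by-two trick, packaged through the graded commutation relation.
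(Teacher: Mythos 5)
Your argument is correct and follows the same route the paper intends: the Proposition is presented as an immediate consequence of the graded commutation relation (Lemma~\ref{lem:swapping}) together with the division-by-two trick already used in Corollary~\ref{cor:nil}. Your explicit decomposition into odd monomials and the cancellation of cross terms simply spells out carefully why the lemma, stated for monomials, extends to odd polynomials, so there is no gap and no genuinely different method.
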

Even though there are several terms whose square is zero (indeed, one can show that all polynomials whose monomials share at least one generator enjoy this property) Pauli exclusion principle becomes significant, from a physical viewpoint, when $f$ is homogenous of degree 1.
\begin{example}
Not all polynomials squared are zero. Indeed,
\[
(\xi_1\xi_2+\xi_3\xi_4)^2 = 2\prod_{i=1}^4 \xi_1 \neq 0,
\]
where we have used $(\xi_1\xi_2)^2=(\xi_3\xi_4)^2=0$.
\end{example}
\subsection{Functions of Grassmann variables}
\begin{definition}\label{def:func-of-grasmmann}
	Let $f:\, \R \to \R$ be an analytic function given by $f(x)= \sum_{k=0}^\infty a_k x^k $, and let $F \in \GA$ be an element of the Grassmann algebra.
	We define the composition of an analytic function with an element of the Grassmann algebra, $f(F)$, by
	\begin{equation*}
		f(F)\coloneqq \sum_{k=0}^{\infty}
		a_k F^{k}.
	\end{equation*}
\end{definition}
As a consequence of this definition, and the nilpotency of the generators, one can observe for example that for any generator $\xi$
\[
\exp(\xi)=\sum_{k=0}^\infty \f{\xi^k}{k!}=1+\xi.
\]
\begin{proposition}\label{pro:comm-exponential}
If $f,g\in \GA$ are even polynomials, then
\begin{equation}\label{eq:comm-exponential}
\exp(f+g) = \exp(f) \exp(g).
\end{equation}
\begin{proof}
The proof follows because $f$ and $g$ are even polynomials, so their monomials commute like ordinary numbers, where the relation \eqref{eq:comm-exponential} holds. 
\end{proof}
\begin{remark}
    Note that Proposition \ref{pro:comm-exponential} does not necessarily hold for odd polynomials. As an example, since $(\xi_1+\xi_2)^2 = \xi_1\xi_2 + \xi_2\xi_1=0$, we have
    \[
    \exp(\xi_1+\xi_2) = 1 + \xi_1 + \xi_2,
    \]
     whereas
    \[
    \exp(\xi_1) \exp(\xi_2) = 1 + \xi_1 +\xi_2+\xi_1\xi_2.
    \]
\end{remark}
\end{proposition}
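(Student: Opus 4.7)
The plan is to reduce the identity to the classical commutative case by first showing that any two even polynomials in $\GA$ commute as elements of the algebra, and then applying the binomial theorem term-by-term inside the series defining $\exp$.

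For the commutativity step, I would write $f$ and $g$ as finite linear combinations of even monomials, say $f = \sum_\alpha c_\alpha f_\alpha$ and $g = \sum_\beta d_\beta g_\beta$ with $p(f_\alpha) = p(g_\beta) = 0$ for all $\alpha,\beta$. By Lemma~\ref{lem:swapping}, each pair satisfies $f_\alpha g_\beta = (-1)^{p(f_\alpha) p(g_\beta)} g_\beta f_\alpha = g_\beta f_\alpha$. Bilinearity of the algebra product then yields $fg = gf$. This is exactly the step where the evenness hypothesis is essential: if one of the polynomials were odd, an extra sign would appear and the remark following the proposition shows the identity genuinely fails.

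Once commutativity is in hand, the usual binomial identity $(f+g)^k = \sum_{j=0}^k \binom{k}{j} f^j g^{k-j}$ follows by induction on $k$, using only the fact that $f$ and $g$ commute. Substituting into Definition~\ref{def:func-of-grasmmann} and reindexing $m = k-j$ gives
\[
\exp(f+g) = \sum_{k=0}^\infty \frac{1}{k!}\sum_{j=0}^k \binom{k}{j} f^j g^{k-j} = \sum_{j=0}^\infty \sum_{m=0}^\infty \frac{f^j}{j!}\frac{g^m}{m!} = \exp(f)\exp(g),
\]
which is the desired identity.

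No real obstacle arises. It is worth noting that although Definition~\ref{def:func-of-grasmmann} writes infinite sums, both $f$ and $g$ are polynomials in finitely many generators, each of which is nilpotent by Corollary~\ref{cor:nil}; hence there exists $N$ with $f^j = g^j = (f+g)^j = 0$ for $j \geq N$, so all sums are actually finite and the rearrangement above is trivially legitimate. The entire mathematical content of the proposition is concentrated in the commutativity observation, which is precisely the one-line justification the author provides.
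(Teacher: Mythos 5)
Your proposal is correct and takes essentially the same route as the paper: the entire content is the observation that even polynomials commute (via Lemma~\ref{lem:swapping}), after which the usual binomial/exponential manipulation goes through, which is exactly the paper's one-line argument spelled out in detail. The only small caveat is your claim that $f^j=g^j=(f+g)^j=0$ for large $j$: this holds only when $f$ and $g$ have no constant (degree-zero) term, since constants are even monomials; in the general case the coefficientwise sums are still convergent real series and the rearrangement remains valid, so the argument is unaffected.
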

\subsection{Differentiation and integration}\label{subsec:int_fer}
From now on, we will simplify the structure of the Grassmann algebra even more by giving it only a finite number of generators $\{\xi_i:\,1\leq i\leq n\}.$
\begin{definition}[Right derivative]
  The right derivative is the linear map $\GA\to \GA$ which acts on monomials $\xi_{i_1}\cdots\xi_{i_p},\,1\leq i_1< \ldots<i_p\leq n$, as
\begin{equation}\label{eq:def_der}
     \begin{cases}
   \f{\partial}{\partial_{\xi_j}} \xi_{i_1}\cdots\xi_{i_p} =
    (-1)^{\alpha-1} \xi_{i_1} \cdots \xi_{i_{\alpha-1}} \xi_{i_{\alpha+1}} \cdots \xi_{i_p} & \text{if there is }1\le \alpha\le p \text{ such that } i_{\alpha} = j, \\
    \hfil 0 & \text{otherwise. }  
  \end{cases}
\end{equation}
\end{definition}
In other words, if $x=x_1+\xi_i x_2$ where $x_1,\,x_2\in \GA$ are Grassmann terms in which $\xi_i$ does not occur, one has
    \[
    \f{\partial}{\partial {\xi_i}}x= x_2.
    \]
    It follows that the derivative is also nilpotent, in the sense that
    \[
    \left(\f{\partial}{\partial \xi_i}\right)^2=0
    \]
    for all $i$. Moreover, the derivative satisfies the {\it Leibniz rule}
    \begin{equation}\label{def:Leibnizrule}
        \f{\partial}{\partial \xi_i} (fg) = \left( \f{\partial}{\partial \xi_i} f \right) g + (Pf) \left(\f{\partial}{\partial \xi_i} g \right).
    \end{equation}
      where $f,g\in \GA$ and $P$ is the {\it parity operator}~\cite[Section 2.2]{wegner2016supermathematics} defined by
    \[
    P \left( \xi \right) = -\xi.
    \]
    This operator essentially multiplies a monomial of $k$ variables by $(-1)^k$ (whence Lemma $\ref{lem:swapping}$ can be rephrased as $fg=P(gf)$).
\begin{definition}\label{def:trasl-of-grasmmann}
Let $f:\, \R \to \R$ be an analytic function and let $F,G\in\GA$ such that $F$ is generated by $\xi_1,...,\xi_n$ and $G$ is an odd polynomial. Then
\begin{equation}\label{eq:trasl}
f(F+G) = f(F) + \sum_{i=1}^n \dfrac{\partial}{\partial \xi_i} f(F) G.
\end{equation}
\end{definition}
\begin{remark}
    Note that the oddness of $G$ is crucial for \eqref{eq:trasl} to make sense. Indeed, in that case this substitution can be expressed in terms of a {\it finite} Taylor series.
\end{remark}
    \begin{definition}[Grassmann--Berezin integration]
The Grassmann--Berezin integral is defined as
\[
\int \D(\boxi) x  =\int \De \xi_1\De \xi_2\cdots\De\xi_n x\coloneqq\partial_{\xi_{1}}\partial_{\xi_{2}} \cdots \partial_{\xi_n}F, \quad x\in \GA.
\]
    \end{definition}
    Surprisingly the integral is defined in the same way as the derivative (actually, the two coincide!). Even though surprising, this definition retains many useful properties of the integral, for example linearity, or the fact that an integral does not depend on the variables which are integrated out. It is however crucial to keep track of the order of integration because of~\eqref{eq:def_der}. For example
    \[
    \int \De \xi_1\De\xi_2( \xi_2\xi_1)=1
    \]
    but
    \begin{eqnarray*}
    \int \De \xi_2\De\xi_1(\xi_2\xi_1)=\f{\partial}{\partial\xi_2}\left(\f{\partial}{\partial\xi_1}(-\xi_1\xi_2)\right)=-\f{\partial}{\partial\xi_2}\xi_2=-1.
    \end{eqnarray*} For this reason, to fix notation we will now start working with a special Grassmann algebra: the one in which $n=2m$ and the set of generators is divided into two sets of variables $\{\xi_i,\,\bxi_i:\,1\leq i\leq m\}$. The bar is reminiscent of complex conjugation, but we consider it only as a special notation to distinguish the two sets of variables. In this setup, we choose
    \begin{equation}\label{eq:BerLeb}
    \int \Diff x\coloneqq\int \left(\prod_{i=1}^n
\De\xi_i\De\bxi_i\right) x ,\quad x\in \GA.    \end{equation}
It is important to observe that integration satisfies
\begin{equation}\label{eq:int_rule}
    \int\De \xi\De \bxi \left(a_1+a_2\xi+a_3\bxi+a_4\bxi\xi\right)=a_4,\quad a_i\in \R,\,1\leq i\leq 4
\end{equation}
where $\xi,\,\bxi$ are arbitrary generators. In other words, only polynomial expressions in which all generator appear give a non-zero contribution to integrals.

There are many properties of the Grassmann integral that work exactly in the same way as for standard integrals: invariance under translation, change-of-variables formulas and Fubini's theorem. We will prove them now for completeness, beginning with translation invariance. The fact that it holds is intuitively clear because the integral is ``morally'' a derivative, and thus does not see shifts. Let's see the proof more precisely.
\begin{proposition}[Invariance under translation for Grassmann--Berezin integration]
Let $I=\{i_1,...,i_p\}$ be an ordered sequence of indices in $[n]$ and let $\chi_1,...,\chi_n$ be odd elements of $\GA$ satisfying $\chi_j=0$ for any $j\notin I$. Then
\begin{equation}\label{eq:invtrasl}
    \int \De \xi_{i_p} \cdots \De \xi_{i_1} f(\xi+\chi) = \int \De \xi_{i_p} \cdots \De \xi_{i_1} f(\xi),
\end{equation}
where $f(\xi+\chi)$ denotes the substitution defined in \eqref{eq:trasl}.
\end{proposition}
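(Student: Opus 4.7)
The plan is to exploit the identity $\int \De\xi_{i_p}\cdots \De\xi_{i_1}=\partial_{\xi_{i_p}}\cdots\partial_{\xi_{i_1}}$ and to show that every contribution to $f(\xi+\chi)$ beyond $f(\xi)$ itself is annihilated by this iterated derivative. By linearity of both the Berezin integral and the substitution, I may assume that $f$ is a single monomial $\xi_{j_1}\cdots\xi_{j_k}$ in the generators of $\GA$.

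For such a monomial, the substitution prescribed by \eqref{eq:trasl} amounts to replacing each factor $\xi_{j_\ell}$ by $\xi_{j_\ell}+\chi_{j_\ell}$ and expanding via distributivity; since $\chi_j=0$ whenever $j\notin I$, only the generators indexed by $I$ are effectively shifted. The resulting expression is a sum of monomials indexed by subsets $S\subseteq\{\ell:\, j_\ell\in I\}$ recording which factors have been replaced by a $\chi_{j_\ell}$. The summand $S=\emptyset$ is the original $f(\xi)$, whose iterated derivative reproduces the right-hand side of \eqref{eq:invtrasl}. For any other summand, the generator $\xi_{j_\ell}$ is missing for some $\ell\in S$; since the Grassmann derivatives anticommute, I can reorder $\partial_{\xi_{i_p}}\cdots\partial_{\xi_{i_1}}$ (at the cost of a sign) so that $\partial_{\xi_{j_\ell}}$ acts first, and by the explicit formula \eqref{eq:def_der} it annihilates any polynomial not containing $\xi_{j_\ell}$.

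The main obstacle, and the step requiring the most care, is justifying that $\chi_{j_\ell}$ cannot itself supply a copy of the missing $\xi_{j_\ell}$ and so spoil the annihilation. This is the implicit content of the substitution convention \eqref{eq:trasl}: the odd perturbations $\chi_j$ should be treated as built from generators algebraically independent of the integration variables $\xi_{i_1},\ldots,\xi_{i_p}$, so that the Grassmann chain rule $\partial_{\xi_{i_\alpha}}[g(\xi+\chi)]=(\partial_{\xi_{i_\alpha}}g)(\xi+\chi)$ holds and can be iterated via the Leibniz rule \eqref{def:Leibnizrule} and the parity operator. Once this independence is in place, applying all $p$ derivatives to $f$ first produces a polynomial in the remaining generators, on which the substitution acts trivially (as $\chi_j=0$ for $j\notin I$), matching the right-hand side and completing the argument.
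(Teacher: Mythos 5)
Your argument is correct and follows essentially the same route as the paper's proof: rewrite the Berezin integral as the iterated derivative $\partial_{\xi_{i_p}}\cdots\partial_{\xi_{i_1}}$, reduce to a single monomial by linearity, and observe that every term carrying a $\chi_{j_\ell}$ is annihilated because the derivatives do not see the shifts, which is exactly the paper's use of $\tfrac{\partial}{\partial\xi_i}(\xi_j+\chi_j)=\delta_{ij}$ together with the Leibniz rule to set $\chi_i=0$ for all $i\in I$. The point you flag as the delicate step --- that the $\chi_j$ must not involve the integration variables $\xi_{i_1},\ldots,\xi_{i_p}$, so that $\partial_{\xi_i}\chi_j=0$ --- is indeed used tacitly by the paper in that same identity, so making it explicit is a refinement rather than a deviation.
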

\begin{proof}
    To prove the statement, it is enough to show that \eqref{eq:invtrasl} can be written as 
    \[
    \f{\partial}{\partial \xi_{i_p}} \cdots \f{\partial}{\partial \xi_{i_1}} f(\xi+\chi) = \f{\partial}{\partial \xi_{i_p}} \cdots \f{\partial}{\partial \xi_{i_1}} f(\xi).
    \]
    To prove this, it suffices to consider the cases in which $f(\xi)=\xi_{i_1}\cdots\xi_{i_p}$ for some sequence of indices $\{j_1,...,j_m\}\subseteq[n]$. For any $i\in I$ and $j\in[n]$ we have that
    \[
    \f{\partial}{\partial \xi_i} (\xi_j+\chi_j) = \f{\partial}{\partial \xi_i} \xi_j = \delta_{ij}.
    \]
    Using this relation together with the Leibniz rule, we note that $\f{\partial}{\partial \xi_i} (\xi_{j_1}+\chi_{j_1}) \cdots (\xi_{j_m}+\chi_{j_m})$ equals the same object in which $\chi_i$ has been replaced by zero. By iterating this for $\f{\partial}{\partial \xi_{i_1}}\cdots\f{\partial}{\partial \xi_{i_p}}$, we set $\chi_i=0$ for any $i\in I$. However, by hypothesis these are the only nonzero $\chi_i$. This concludes the proof.
\end{proof}
Changing variables in Grassmann--Berezin integration is, unlike translation invariance, a rule that defies its standard counterpart. The rule is as follows.
\begin{proposition}[Linear change of variables for Grassmann--Berezin integration]
    Let $A\in M_n(\R)$ and let $f:\mathbb{R}\to\mathbb{R}$ be an analytic function. Define new Grassmannian variables by $\chi_i=\sum_{j=1}^n A_{ij}\xi_j$, $i=1,...,n$. Then
    \begin{equation}\label{eq:change_var}
    \int \De \xi_n \cdots \De \xi_1 f(\xi_1,...,\xi_n) = \det(A) \int \De \chi_n \cdots \De \chi_1 f(\chi_1,...,\chi_n).
    \end{equation}
\end{proposition}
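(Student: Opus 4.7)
The plan is to reduce \eqref{eq:change_var} to a single algebraic identity between top monomials in the two bases, namely $\chi_1 \chi_2 \cdots \chi_n = \det(A)\, \xi_1 \xi_2 \cdots \xi_n$. Once this is established, the change-of-variables formula follows by bookkeeping, because the Berezin integral over all $n$ generators is the linear functional on $\GA$ that reads off the coefficient of the top monomial in the chosen basis.

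Concretely, by linearity of the integral I may assume $f$ is a single monomial. The rule \eqref{eq:int_rule}, extended to all $n$ generators, shows that the only monomial whose integral with respect to $\De \xi_n \cdots \De \xi_1$ is non-zero is $\xi_1 \cdots \xi_n$ itself, and analogously for the $\chi$'s. Hence all monomials of degree strictly less than $n$ contribute zero on both sides of \eqref{eq:change_var}, and the non-trivial case reduces to $f = \xi_1 \cdots \xi_n$, regarded as a single element of $\GA$ that can be expressed in either basis.

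The key identity is then obtained by direct expansion: substituting $\chi_i = \sum_j A_{ij} \xi_j$ and multiplying,
\begin{equation*}
\chi_1 \chi_2 \cdots \chi_n = \sum_{j_1,\ldots,j_n=1}^n \left(\prod_{i=1}^n A_{i j_i}\right) \xi_{j_1} \xi_{j_2} \cdots \xi_{j_n}.
\end{equation*}
Nilpotency (Corollary~\ref{cor:nil}) eliminates all terms in which two indices coincide, so the sum collapses to one over permutations $\sigma$; applying \eqref{eq:AC} to reorder $\xi_{\sigma(1)} \cdots \xi_{\sigma(n)} = \mathrm{sgn}(\sigma)\, \xi_1 \cdots \xi_n$ and summing over $\sigma$ yields exactly the Leibniz formula for $\det(A)$. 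The only point requiring real care is this sign bookkeeping when bringing each $\xi_{\sigma(i)}$-product into canonical order; the perhaps counter-intuitive appearance of $\det(A)$ rather than $\det(A)^{-1}$ (as one might expect from the ordinary-calculus Jacobian) is a direct consequence of Berezin integration being really a derivative in disguise, so that the classical Jacobian is inverted.
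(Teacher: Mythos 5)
Your proposal is correct and takes essentially the same route as the paper: the heart of both arguments is expanding $\chi_1\cdots\chi_n=\sum_{j_1,\ldots,j_n}\bigl(\prod_{i=1}^n A_{ij_i}\bigr)\xi_{j_1}\cdots\xi_{j_n}$, using nilpotency and anticommutation to collapse the sum over permutations into the Leibniz formula for $\det(A)$. The paper carries out this computation directly under $\int \De\xi_n\cdots\De\xi_1$ and leaves the reduction to the top monomial implicit, while you spell out that linearity/coefficient-reading bookkeeping explicitly; nothing essential differs.
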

\begin{proof}
Let $\mathfrak S_n$ denote the set of permutations of $n$ elements and $\hbox{sgn}(\sigma)$ the sign of the permutation $\sigma$. The statement follows after noting that
\[
\begin{array}{ll}
\displaystyle \int \De \xi_n \cdots \De \xi_1 \chi_1 \cdots \chi_n &= \displaystyle \int \De \xi_n \cdots \De \xi_1 \sum_{j_1=1}^n A_{ij_1}\xi_{j_1} \cdots \sum_{j_n=1}^n A_{ij_n}\xi_{j_n} \\
&= \displaystyle \sum_{\sigma\in\mathfrak S_n} \hbox{sgn}(\sigma) \prod_{i=1}^n A_{i\sigma(i)} \\
&= \det(A).
\end{array}
\]
\end{proof}

Note that for ordinary multivariate integrals the factor $\det(A)$ should be on the other side compared to~\eqref{eq:change_var}: if $\mathbf u=A\mathbf x$ then
\[
\int_{\R^n} f(\mathbf x) \De x_1\ldots\,\De x_n=\int_{\R^n}f\left(A^{-1}\mathbf u\right)\frac{1}{|\det A|}\De\mathbf u.
\]
Finally we can state the analog of Fubini's theorem, which essentially mimics its standard counterpart up to a possible sign change.
\begin{proposition}[Fubini theorem for Grassmann--Berezin integration]
Let $I=\{i_1,...,i_p\}$ be an ordered sequence of indices in $[n]$, and let $I^c=\{j_1,...,j_{n-p}\}$. Then, for any elements $f\in \GA$ generated by $\xi_{i_1},...,\xi_{i_p}$ and $g\in \GA$ generated by $\xi_{j_1},...,\xi_{j_{n-p}}$, we have
\[
\int \De \xi_{i_p} \cdots \De \xi_{i_1} \De \xi_{j_{n-p}} \cdots \De \xi_{j_1} fg = (-1)^{p(n-p)} \left( \int \De \xi_{i_p} \cdots \De \xi_{i_1} f \right) \left( \int \De \xi_{j_{n-p}} \cdots \De \xi_{j_1} g \right).
\]
\end{proposition}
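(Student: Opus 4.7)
The plan is to write both integrals as iterated partial derivatives (using the identification in the definition of the Grassmann--Berezin integral) and then push the derivatives in the variables $\xi_{j_1},\dots,\xi_{j_{n-p}}$ through $f$ via the Leibniz rule \eqref{def:Leibnizrule}. Since these derivatives annihilate $f$, the only effect of each commutation is to introduce the parity operator $P$ acting on $f$, and the sign $(-1)^{p(n-p)}$ will arise from iterating this.

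Concretely, by linearity and by \eqref{eq:int_rule}, I would first reduce to the case in which $f$ is homogeneous of degree $p$ and $g$ is homogeneous of degree $n-p$; otherwise no term in $fg$ saturates all $n$ generators and neither side of the identity receives a nonzero contribution. In this reduction $p(f)\equiv p\pmod 2$, so $Pf=(-1)^p f$. I would then apply $\partial_{\xi_{j_1}}$ to $fg$; since $f$ does not depend on any variable in $I^c$, the Leibniz rule gives
\[
\partial_{\xi_{j_1}}(fg) = (\partial_{\xi_{j_1}} f)\,g + (Pf)\,\partial_{\xi_{j_1}}g = (-1)^p\, f\,\partial_{\xi_{j_1}}g.
\]
Iterating this $n-p$ times, each successive derivative $\partial_{\xi_{j_k}}$ sees the same homogeneous $f$ sitting on its left and contributes another factor $(-1)^p$, yielding
\[
\partial_{\xi_{j_{n-p}}}\cdots\partial_{\xi_{j_1}}(fg) = (-1)^{p(n-p)}\, f\cdot \partial_{\xi_{j_{n-p}}}\cdots\partial_{\xi_{j_1}}g.
\]

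The remaining factor $\partial_{\xi_{j_{n-p}}}\cdots\partial_{\xi_{j_1}}g$ equals $\int \De\xi_{j_{n-p}}\cdots\De\xi_{j_1}\,g$, which is a scalar and hence commutes with everything; applying then the outer derivatives $\partial_{\xi_{i_p}}\cdots\partial_{\xi_{i_1}}$ to $f$ produces $\int \De\xi_{i_p}\cdots\De\xi_{i_1}\,f$, and pulling the sign and the two scalar integrals out of the expression gives the claimed identity. The main point to be careful about is the iterated Leibniz computation: one must check that the intermediate expression always has the form $f\cdot(\text{something})$ with the same homogeneous $f$ on the left, so that $P$ acts on it with the definite sign $(-1)^p$ at each step. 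Once this bookkeeping is in place, the rest of the argument is an elementary manipulation of scalars.
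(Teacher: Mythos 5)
Your proposal is correct and follows essentially the same route as the paper: reduce by (bi)linearity to the top-degree monomials of $f$ and $g$, use the Leibniz rule \eqref{def:Leibnizrule} (with $\partial_{\xi_{j_k}}f=0$ and $Pf=(-1)^p f$) to move the $I^c$-derivatives past $f$, producing the sign $(-1)^{p(n-p)}$ and the intermediate identity $\partial_{\xi_{j_{n-p}}}\cdots\partial_{\xi_{j_1}}(fg)=(-1)^{p(n-p)}f\,\partial_{\xi_{j_{n-p}}}\cdots\partial_{\xi_{j_1}}g$, and then apply the remaining derivatives in the $I$-variables. Your write-up just makes the sign bookkeeping in the iterated Leibniz step more explicit than the paper does.
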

\begin{proof}
    Expanding $f$ and $g$ in monomials, we see that the unique terms that contribute to the integrals are $\xi_{i_1}\cdots\xi_{i_p}$ and $\xi_{j_1}\cdots\xi_{j_{n-p}}$ in $f$ and $g$, respectively. We can then assume without loss of generality that $f=\xi_{i_1}\cdots\xi_{i_p}$ and $g=\xi_{j_1}\cdots\xi_{j_{n-p}}$. By using the derivative rule \eqref{eq:def_der} and the Leibniz rule \eqref{def:Leibnizrule}, we get
    \[
    \int \De \xi_{j_{n-p}} \cdots \De \xi_{j_1} fg = (-1)^{p(n-p)} f \left( \int \De \xi_{j_{n-p}} \cdots \De \xi_{j_1} g \right).
    \]
    The result then follow after integrating both sides with respect to $\De \xi_{i_p} \cdots \De \xi_{i_1}$.
\end{proof}
\section{Grassmannian calculus and Gaussian random variables}\label{sec:Gauss}
This Subsection is devoted to studying ``Gaussian integrals'' in the Grassmannian setting. In particular, let $A\in M_n(\R)$. By using the notation $(\boxbxi,\,A\boxi)\coloneqq \sum_{i,\,j=1}^n \bxi_i A(i,\,j)\xi_j$ we wish to compute the analog of a Gaussian measure, that is, objects of the form
\[
\int \Diff\e^{(\boxbxi,\,A\boxi)}.
\]
We will prove that
\begin{proposition}\label{pro:gau_int}
    \[
\int \Diff\e^{(\boxbxi,\,A\boxi)}=\det(A).
\]
\end{proposition}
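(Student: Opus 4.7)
The plan is to expand the exponential, use the fact that the integrand is a polynomial in Grassmann variables of degree $2n$, and track signs carefully to recognize the determinant as a sum over permutations. The key observation is that each summand $\bxi_i A(i,j) \xi_j$ of $(\boxbxi,\,A\boxi)$ is of degree two in the generators, hence an even element of $\GA$. By Proposition~\ref{pro:comm-exponential} these summands mutually commute inside the exponential, and since $(\bxi_i \xi_j)^2 = 0$ by nilpotency we have
\[
\e^{(\boxbxi,\,A\boxi)} = \prod_{i,j=1}^n \e^{A(i,j)\bxi_i\xi_j} = \prod_{i,j=1}^n \left(1 + A(i,j)\bxi_i \xi_j\right).
\]

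Next, I would expand this product and single out the terms that survive the Berezin integral. By~\eqref{eq:int_rule}, a monomial contributes only if every generator $\xi_k$ and every $\bxi_k$ appears exactly once. Since each factor in the product either contributes $1$ or a pair $A(i,j)\bxi_i\xi_j$, the surviving choices correspond bijectively to permutations: we must select a family of pairs $\{(i,\pi(i)) : i \in [n]\}$ for some $\pi \in \mathfrak{S}_n$. Using again that each $\bxi_i \xi_{\pi(i)}$ is even and therefore commutes with the other pairs, the surviving contribution becomes
\[
\sum_{\pi \in \mathfrak{S}_n} \left(\prod_{i=1}^n A(i,\pi(i))\right) \bxi_1\xi_{\pi(1)} \bxi_2\xi_{\pi(2)} \cdots \bxi_n\xi_{\pi(n)}.
\]

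The remaining task is to show that $\int \Diff \,\bxi_1\xi_{\pi(1)} \cdots \bxi_n\xi_{\pi(n)} = \sgn(\pi)$, after which Proposition~\ref{pro:gau_int} follows from Leibniz's formula $\det(A) = \sum_\pi \sgn(\pi) \prod_i A(i,\pi(i))$. To compute the monomial integral, I would first bring the expression to the canonical form $\bxi_1\cdots\bxi_n \xi_1 \cdots \xi_n$ by two rearrangements: a ``riffle'' that moves all $\bxi_i$'s to the left of all $\xi_{\pi(i)}$'s, and the reordering $\xi_{\pi(1)}\cdots\xi_{\pi(n)} = \sgn(\pi)\, \xi_1\cdots\xi_n$. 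Each adjacent swap of two generators costs a factor of $-1$ by the anticommutation relation~\eqref{eq:AC}.

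The main obstacle is the bookkeeping of these signs. The riffle part produces a factor $(-1)^{n(n-1)/2}$, which I would verify by induction on $n$ (each time we insert a new pair $\bxi_n \xi_{\pi(n)}$, the $\xi_{\pi(n)}$ needs to cross $n-1$ of the $\bxi$'s already on the left, contributing $(-1)^{n-1}$). The direct computation of $\int \Diff\, \bxi_1 \cdots \bxi_n \xi_1 \cdots \xi_n$ by repeatedly applying~\eqref{eq:def_der} from the outermost variable inward yields the same factor $(-1)^{n(n-1)/2}$. These two signs cancel, leaving precisely $\sgn(\pi)$ and completing the proof.
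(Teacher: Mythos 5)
Your proof is correct and takes essentially the same route as the paper: factor the exponential into a product of terms $1+A(i,j)\bxi_i\xi_j$ using evenness and nilpotency, observe via~\eqref{eq:int_rule} that only monomials containing every generator exactly once survive so that the surviving terms are indexed by permutations, and track anticommutation signs to recover Leibniz's formula for $\det(A)$. The only differences are bookkeeping ones: you factor over all $n^2$ entries instead of over the $n$ rows, and you normalize against $\bxi_1\cdots\bxi_n\xi_1\cdots\xi_n$ with two cancelling factors $(-1)^{n(n-1)/2}$, whereas the paper reorders each monomial directly into $\bxi_n\xi_n\cdots\bxi_1\xi_1$, which integrates to $1$; both sign computations check out.
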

Before we enter into the proof, let us note that there are no assumptions needed on $A$, unlike the real case in which
\[
\int_{\R^n} \prod_{i=1}^n\f{\De x_i}{\sqrt{2\pi}}\,\e^{-\f12(\boxx,\,A\boxx)}=\left(\det A\right)^{-\f{n}2}
\]
requires $A$ to be symmetric and positive-definite.

\begin{proof}[Proof of Proposition~\ref{pro:gau_int}]
    Since the polynomial $(\boxbxi,\,A\boxi)$ is even, by Proposition \ref{pro:comm-exponential}
    \begin{equation}\label{eq:prodsplit}
        \exp{(\boxbxi,\,A\boxi)} = \prod_{i=1}^n \exp{\left(\bxi_i\sum_{j=1}^n A_{ij}\xi_j\right)}
        = \prod_{i=1}^n \left( 1+ \bxi_i\sum_{j=1}^n A_{ij}\xi_j\right).
    \end{equation}
    When expanding \eqref{eq:prodsplit}, since we are integrating with respect to $\Diff$, only the term containing $\bxi_i$ for all $i=1,...,n$ survives integration, which gives rise to
    \[
     \prod_{i=1}^n \bxi_i \left( \sum_{j=1}^n A_{ij}\xi_j\right) = \prod_{i=1}^n \bxi_i \left( \sum_{j_i=1}^n A_{i{j_i}}\xi_{j_i}\right).
    \]
    Similarly, the terms that give non-vanishing contribution after integrating must contain $\xi_1\cdots\xi_n$ up to permutation. They have the form
    \begin{equation}\label{eq:permsum}
    \sum_{\sigma\in\mathfrak S_n} A_{1j_{\sigma(1)}} \cdots A_{nj_{\sigma(n)}} \bxi_n \xi_{j_{\sigma(n)}} \cdots \bxi_1 \xi_{j_{\sigma(1)}}.
    \end{equation}
    Putting $\bxi_n \xi_{j_{\sigma(n)}} \cdots \bxi_1 \xi_{j_{\sigma(1)}}$ into a standard order, i.e., $\bxi_n \xi_n \cdots \bxi_1\xi_1$, yields the sign of the permutation $\sigma$, so that \eqref{eq:permsum} becomes
    \[
    \sum_{\sigma\in\mathfrak S_n} \prod_{i=1}^n A_{ij_{\sigma(i)}} \hbox{sgn}(\sigma) \bxi_n \xi_n \cdots \bxi_1\xi_1.
    \]
    Since 
    \[
    \int \Diff \bxi_n \xi_n \cdots \bxi_1\xi_1 =1,
    \]
    we get the claim.
\end{proof}
Next we state Wick's theorem which, like in the real case, allows one to compute multipoint functions for a Gaussian vector. In the statement, we are thinking of $\boxi$ resp. $\boxbxi$ as $(1\times n)$-dimensional vectors. The version of the Theorem we need is the following, but more can be found in~\cite[Theorem A.16]{sportiello}, together with their proofs. We will use, for a matrix $A\in M_n(\R)$, the notation
    \[
    A_{i^\c,\,j^\c},\quad i,\,j\in [n]
    \]
for the submatrix obtained removing row $i$ and column $j$ from $A$.
\begin{theorem}[Wick’s theorem]\label{thm:sportiello}
Let $A $ be an ${n\times n}$, $B$ an $ {r\times n}$ and $C$ an $ {n\times r}$ matrix respectively with coefficients in $\R$.
For any sequences of indices $I = 
\{i_1,\,\dots,\,i_r\}$ and $J = \{j_1,\,\dots,\,j_r\}$ in $[n]$ of the same length $r$, if the matrix $A$ is invertible we have
\begin{enumerate}[label=\arabic*.,ref=\arabic*.]
    \item\label{thm_Wick_one} For all $i\in I$, $j\in J$ one has $\int\Diff   \bxi_{i}\xi_{j}\exp\left((\boxbxi, A\boxi)\right) = \det(A) \det\left(A_{i^\c,\,j^\c}\right)$.
    \item\label{thm_Wick_two} $\int\Diff \prod_{\alpha=1}^r \left(\boxbxi^T C\right)_\alpha (B\boxi)_\alpha\exp\left((\boxbxi, A\boxi)\right) = \det(A) \det\left(BA^{-1}C\right)$.
\end{enumerate}
If $\#I \neq \#J$, the integral is $0$.
\end{theorem}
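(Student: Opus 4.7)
The plan is to handle the two parts of Wick's theorem separately. For part~\ref{thm_Wick_one} I would rely on a direct combinatorial expansion of the exponential, mirroring the proof of Proposition~\ref{pro:gau_int}; for part~\ref{thm_Wick_two} I would introduce auxiliary Grassmann sources and convert the insertion into source derivatives of a generating functional. The $\#I\neq \#J$ vanishing follows from a parity argument independent of both.

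For part~\ref{thm_Wick_one}, the factorisation
\[
\exp((\boxbxi, A\boxi)) = \prod_{k=1}^n \left(1 + \bxi_k \sum_{l=1}^n A_{kl}\xi_l\right)
\]
is available because $(\boxbxi, A\boxi)$ is an even polynomial (Proposition~\ref{pro:comm-exponential}). Multiplying by $\bxi_i \xi_j$, nilpotency (Corollary~\ref{cor:nil}) forces the $k=i$ factor to contribute its constant $1$, since otherwise $\bxi_i$ would appear twice. A non-vanishing contribution to the Berezin integral, cf.~\eqref{eq:int_rule}, requires each generator to appear exactly once, so from every remaining factor with $k\neq i$ one must select the term $\bxi_k\sum_l A_{kl}\xi_l$, and the collected $\xi$'s must exhaust $[n]\setminus\{j\}$ without repetition. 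The allowed selections are therefore indexed by bijections $\sigma:[n]\setminus\{i\}\to[n]\setminus\{j\}$, each contributing $\prod_{k\neq i} A_{k,\sigma(k)}$ after the resulting monomial is brought to the normal order used in Proposition~\ref{pro:gau_int}; tracking the reordering signs identifies the sum with the signed minor of $A$ obtained by deleting row $i$ and column $j$.

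For part~\ref{thm_Wick_two}, I would introduce auxiliary Grassmann generators $\eta_1,\dots,\eta_n$ and $\bar\eta_1,\dots,\bar\eta_n$ anticommuting with each other and with $\boxi,\boxbxi$, and consider the generating functional
\[
Z(\bar\eta, \eta) = \int \Diff \exp\!\left((\boxbxi, A\boxi) + (\boxbxi, \eta) + (\bar\eta, \boxi)\right).
\]
Since $A$ is invertible, the shifts $\boxi\mapsto \boxi - A^{-1}\eta$ and $\boxbxi \mapsto \boxbxi - (A^T)^{-1}\bar\eta$ complete the square, reducing the exponent to $(\boxbxi, A\boxi) - (\bar\eta, A^{-1}\eta)$. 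Applying the translation-invariance formula~\eqref{eq:invtrasl} separately in the $\boxi$ and $\boxbxi$ blocks and then Proposition~\ref{pro:gau_int} yields
\[
Z(\bar\eta, \eta) = \det(A)\,\exp\!\left(-(\bar\eta, A^{-1}\eta)\right).
\]
The insertion $\prod_{\alpha=1}^r (\boxbxi^T C)_\alpha (B\boxi)_\alpha$ can then be produced, under the integral sign, by applying the operator $\prod_{\alpha=1}^r\left(-\sum_k C_{k\alpha}\partial_{\eta_k}\right)\left(\sum_l B_{\alpha l}\partial_{\bar\eta_l}\right)$ to $Z$ and setting $\eta=\bar\eta=0$: each $\partial_{\bar\eta_l}$ pulls down $\xi_l$ and each $-\partial_{\eta_k}$ pulls down $\bxi_k$ from the respective source exponentials. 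Applied to the right-hand side $\det(A)\exp(-(\bar\eta, A^{-1}\eta))$ and evaluated at zero, the same operator retains only fully paired terms, and the Leibniz expansion of the resulting sum is precisely $\det(A)\det(BA^{-1}C)$.

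Finally, the $\#I \neq \#J$ vanishing follows from the parity constraint underlying~\eqref{eq:int_rule}: the Berezin integral is non-vanishing only on monomials containing every generator exactly once, and since the exponential produces only $\bxi_k\xi_l$ pairs, any imbalance between the numbers of inserted $\bxi$'s and $\xi$'s cannot be compensated. The steps I expect to be most delicate are (i) the sign bookkeeping in part~\ref{thm_Wick_one} when collapsing the bijection sum into a signed minor, and (ii) justifying the completion-of-the-square step in part~\ref{thm_Wick_two}: since $A$ is not assumed symmetric, the two shifts involve different matrices ($A^{-1}$ versus $(A^T)^{-1}$), and one must verify that~\eqref{eq:invtrasl} can be iterated across all $2n$ generators with external odd sources consistently anticommuting with the integration variables.
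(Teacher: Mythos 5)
A preliminary remark: the paper does not prove this theorem at all --- it is quoted from the reference \cite{sportiello} (Theorem A.16) --- so your attempt can only be judged on its own merits rather than against an internal argument. Your overall plan (direct expansion for item~\ref{thm_Wick_one}, a source/generating-functional argument for item~\ref{thm_Wick_two}, parity for the unbalanced case) is a standard and viable route, and the expansion in item~\ref{thm_Wick_one} and the parity argument are sound. However, you should confront what your item~\ref{thm_Wick_one} computation actually yields: selecting the constant from the $k=i$ factor and one $\bxi_k A_{k\sigma(k)}\xi_{\sigma(k)}$ from each other factor produces a \emph{signed} $(n-1)\times(n-1)$ minor of $A$, i.e.\ a cofactor, equivalently $\det(A)$ times an entry of $A^{-1}$ --- not the literal product $\det(A)\det\left(A_{i^\c,\,j^\c}\right)$ printed in the statement. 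The printed right-hand side cannot be correct as written under the paper's integration convention (take $n=1$, $i=j=1$: the integral is $1$ while the formula gives $A_{11}$), and the cofactor form is the one actually used in the proof of Theorem~\ref{thm:mattree} (there $i=j=o$, so the sign is $+1$ and the answer is $\det(A_{o^\c,\,o^\c})$). As it stands, your item~\ref{thm_Wick_one} ends with ``the signed minor'' and silently leaves the claimed identity unreconciled; you should either flag the misprint or state precisely which cofactor you obtain, including the sign bookkeeping you yourself defer.

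The genuine gap is the final step of item~\ref{thm_Wick_two}. After arriving at $Z(\bar\eta,\eta)=\det(A)\exp\left(-(\bar\eta,A^{-1}\eta)\right)$, you assert that applying the product of source derivatives and setting $\eta=\bar\eta=0$ ``is precisely $\det(A)\det(BA^{-1}C)$''. That is exactly the point where the determinant --- the signed sum $\sum_{\sigma\in\mathfrak S_r}\sgn(\sigma)\prod_{\alpha}(BA^{-1}C)_{\alpha\sigma(\alpha)}$, with no stray global sign --- has to be produced, and it requires the same anticommuting bookkeeping as the original integral; as written the step is asserted, and it carries essentially the whole content of the identity being proven. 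To close it, either carry out that derivative computation explicitly (expand the exponential to order $r$ and track the signs of Grassmann derivatives moved past the sources), or dispense with sources: prove the multi-insertion analogue of item~\ref{thm_Wick_one}, namely $\int\Diff\,\bxi_{i_1}\xi_{j_1}\cdots\bxi_{i_r}\xi_{j_r}\,\e^{(\boxbxi,\,A\boxi)}=\pm\det(A)\det\left(\left(A^{-1}(j_\beta,\,i_\alpha)\right)_{\alpha,\beta}\right)$, by the same expansion you used in item~\ref{thm_Wick_one} together with the Jacobi minor identity, and then obtain item~\ref{thm_Wick_two} by multilinearity in the rows of $B$ and columns of $C$ (Cauchy--Binet). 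Your caveats about the completion of the square are well placed: the sources must be adjoined as additional odd generators and the translation formula~\eqref{eq:invtrasl} applied blockwise to the $\boxi$ and $\boxbxi$ variables, and the nonsymmetric shifts ($A^{-1}$ versus $(A^{T})^{-1}$) are handled exactly as you indicate; these points need to be said, but they are not where the argument is incomplete.
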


\section{Grassmannian calculus and uniform spanning trees}\label{sec:ust}
At this stage we would like to show one main area of application for Grassmannian variables: studying spanning trees. Indeed we will show that Grassmann variables can completely describe the edge probabilities of so-called uniform spanning tree, whose definition we will recall now.

For the rest of this Section, we will let $G=(V,\,E, \,w)$ be a finite connected graph with vertex set $V$ and edge set $E$. We will assume that each edge $e=(u,\,v)$, $u,\,v\in V$, has an edge weight $w_e=w_{uv}>0$. In particular, edges are unoriented.
\begin{definition}[Spanning tree]
 A finite subgraph $T\subseteq G$ is called a spanning tree of $G$ if 
 \begin{itemize}
     \item it has no cycles, i. e. there is no non-empty subset of the edge set of $H$ that forms a path\footnote{A graph path is a sequence $\{v_1,\,v_2,\,\ldots,\,v_k\}$ of distinct vertices such that $(x_i,\,x_{i+1})$ are graph edges for all $1\leq i\leq k-1$.} such that the first node of the path corresponds to the last;
     \item it is connected, and
     \item it is spanning, i. e. every $v\in V$ has at least one edge of $H$ incident to it.
 \end{itemize}  
\end{definition}
It is easy to see that any connected and finite graph possesses a finite number $T_G$ of spanning trees (and that this number is non-zero). Therefore it is legit to define a uniform probability on the set of spanning trees.
\begin{definition}[UST]
   The uniform spanning tree is the probability measure on the set of all spanning trees of $G$ with probability mass function
   \[
   \prob(t)=\f1{T_G}
   \]
   for $t$ a spanning tree of $G$.
\end{definition}
In order to understand the UST measure one has to get hold of the constant $T_G$, more specifically needs to count the number of spanning trees. To this end, we will need the following object: the Laplacian matrix, which we have briefly encountered on the square lattice in the Introduction.
\begin{definition}[Laplacian matrix]
    The Laplacian (matrix) $\Delta=\Delta(G)$ is the matrix indexed over $V$ defined as
    \[
    \Delta(u,\,v)\coloneqq
    \begin{cases}
     w_{uv}&\text{if }e=(u,\,v)\in E\\
     -\deg_G(u)\coloneqq-\sum_{v\in V}w_{uv} &\text{if }u=v\\
     0 & \text{otherwise}
    \end{cases}.
    \]
\end{definition}
It follows from the definition that $0$ is an eigenvalue for $\Delta$ with eigenvector $$\underbrace{(1,\,1,\,\ldots,\,1)}_{|V|\text{ times}}.$$
One can in fact prove~\cite[Chapter 1]{Chung} that all eigenvalues of $-\Delta$ are real and non-negative, and that the eigenvalue zero has multiplicity one. We are now ready to prove Kirchhoff's theorem, or the matrix-tree theorem~\cite{Kirchhoff}. This Theorem allows us to count the number of spanning trees as a determinant of (a submatrix of) the Laplacian. We will give a ``Grassmannian proof'' for it, which is due to~\cite{degrandi}.
\begin{theorem}[Matrix-tree theorem]\label{thm:mattree}
    Choose an arbitrary $o \in V$. Then
    \[
    T_G=\det(-\Delta_{o^\c,\,o^\c}).
    \]
\end{theorem}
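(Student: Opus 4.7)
The plan is to express $\det(-\Delta_{o^{\c},o^{\c}})$ as a Grassmann--Berezin Gaussian integral via Proposition~\ref{pro:gau_int}, factorize the exponential edge by edge using evenness and nilpotency of the edge-wise summands, and then match the surviving terms in the resulting edge expansion with spanning trees of $G$.

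First, I would introduce generators $\{\xi_v,\bxi_v\}_{v\in V\setminus\{o\}}$ and apply Proposition~\ref{pro:gau_int} with $A=-\Delta_{o^{\c},o^{\c}}$ to obtain
\[
\det(-\Delta_{o^{\c},o^{\c}}) = \int \Diff\,\exp\!\bigl((\boxbxi,-\Delta_{o^{\c},o^{\c}}\boxi)\bigr).
\]
Using the definition of $\Delta$, and adopting the convention $\xi_o=\bxi_o=0$, a direct manipulation rewrites the quadratic form as a sum over edges:
\[
(\boxbxi,-\Delta_{o^{\c},o^{\c}}\boxi) = \sum_{e=(u,v)\in E} w_e\,(\bxi_u-\bxi_v)(\xi_u-\xi_v),
\]
which is the Grassmannian shadow of the factorization $-\Delta=B^{T}W B$ with $B$ an (oriented) incidence matrix and $W$ the diagonal weight matrix.

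Each summand $\Psi_e\coloneqq w_e(\bxi_u-\bxi_v)(\xi_u-\xi_v)$ is an even polynomial, so by Proposition~\ref{pro:comm-exponential} the exponential factorizes as $\prod_{e\in E}\exp(\Psi_e)$. A short computation using the anticommutation relations~\eqref{eq:AC} shows that $(\bxi_u-\bxi_v)^2=0$, hence $\Psi_e^2=0$, so $\exp(\Psi_e)=1+\Psi_e$. Expanding the product over edges yields
\[
\det(-\Delta_{o^{\c},o^{\c}}) = \sum_{F\subseteq E}\Bigl(\prod_{e\in F}w_e\Bigr)\int\Diff \prod_{e=(u,v)\in F}(\bxi_u-\bxi_v)(\xi_u-\xi_v).
\]

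The heart of the argument, and the main obstacle, is identifying which subsets $F$ give a non-vanishing integral. By the integration rule~\eqref{eq:int_rule}, the only monomial contributing to the integral is $\prod_{v\neq o}\bxi_v\xi_v$, of Grassmann degree $2(n-1)$ with $n=|V|$; since the integrand has degree $2|F|$, only $F$ with $|F|=n-1$ can survive. For such $F$, after anticommuting odd factors one obtains, up to a global sign,
\[
\prod_{e=(u,v)\in F}(\bxi_u-\bxi_v)(\xi_u-\xi_v) = \pm\Bigl(\prod_{e=(u,v)\in F}(\bxi_u-\bxi_v)\Bigr)\Bigl(\prod_{e=(u,v)\in F}(\xi_u-\xi_v)\Bigr),
\]
and each bracketed factor, after Berezin integration, evaluates to $\det(\tilde B_F)$, where $\tilde B_F$ denotes the vertex-edge incidence matrix of the subgraph $(V,F)$ with the row indexed by $o$ removed. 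I would then invoke the classical algebraic graph-theoretic fact that $\det(\tilde B_F)\in\{-1,0,+1\}$, with $|\det(\tilde B_F)|=1$ precisely when $F$ is a spanning tree (a cycle in $F$ induces a linear dependence among the corresponding rows); the product of the two brackets kills all signs, and summing gives
\[
\det(-\Delta_{o^{\c},o^{\c}}) = \sum_{F\text{ spanning tree of }G}\prod_{e\in F}w_e,
\]
which specializes to $T_G$ in the unweighted case $w_e\equiv 1$.
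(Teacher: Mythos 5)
Your argument is correct, but it follows a genuinely different route from the one in the paper. The paper expands $\exp\left((\boxbxi,O\boxi)\right)$ directly in terms of the Laplacian entries, encodes the surviving monomials as arrow configurations (rooted forests whose components may be rooted at $o$, at a dashed-arrow cycle, or at a solid-arrow cycle), and then exhibits a purely fermionic sign cancellation, as in~\eqref{eq:cycle}, between the two cycle-rooted types, so that only trees rooted at $o$ remain; the combinatorial core (cycles cancel) is thus re-derived inside the Grassmann algebra itself. You instead use the incidence factorization $-\Delta=B^TWB$ to split the quadratic form over edges, expand $\prod_e(1+\Psi_e)$, and reduce by Berezin integration to a Grassmannian Cauchy--Binet identity $\det(O)=\sum_{\#F=\#V-1}\bigl(\prod_{e\in F}w_e\bigr)\det(\tilde B_F)^2$, after which you import the classical unimodularity fact that $\det(\tilde B_F)=\pm1$ exactly for spanning trees and $0$ otherwise. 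Your route is shorter, handles weights transparently, and avoids the diagrammatic case analysis, but it outsources precisely the cancellation that the paper's ``Grassmannian proof'' is designed to capture internally. Two small points you should make explicit: the $\pm1$ case of the incidence lemma needs at least a one-line justification (e.g.\ leaf induction), and the claim that ``the product of the two brackets kills all signs'' deserves the short check that the interleaving sign $(-1)^{m(m-1)/2}$ from rewriting $\prod_e(\bxi_u-\bxi_v)(\xi_u-\xi_v)$ as $\bigl(\prod_e(\bxi_u-\bxi_v)\bigr)\bigl(\prod_e(\xi_u-\xi_v)\bigr)$ cancels against the identical sign arising when the top monomial $\bxi_1\cdots\bxi_m\xi_1\cdots\xi_m$ is reordered into the normalization $\bxi_m\xi_m\cdots\bxi_1\xi_1$ used for $\int\Diff$, so the contribution of each $F$ is exactly $+\det(\tilde B_F)^2\prod_{e\in F}w_e$.
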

\begin{proof}
Let us call $-\Delta_{o^\c,\,o^\c}\coloneqq O$. By Theorem~\ref{thm:sportiello}~\ref{thm_Wick_one} we have that
\begin{equation}\label{eq:intO}
\int \Diff \bxi_{o}\xi_o\e^{\left(\boxbxi,\,O\boxi\right)}=\det(O).
\end{equation}
Therefore we need to show that the left-hand side above counts the number of spanning trees of $G$. We expand the exponential as follows: fixing $u$, and taking into account the nilpotency of the generators, we have
\begin{eqnarray*}
   \exp\left(\sum_{v=1}^{\#V}\bxi_u O(u,\,v)\xi_v\right) =1+\sum_{v=1}^{\#V} \bxi_u O(u,\,v)\xi_v=1-\sum_{v\neq u}\bxi_u \xi_v w_{uv}+\sum_{v\neq u}\bxi_u \xi_u w_{uv}.
\end{eqnarray*}
By symmetry, a term like $w_{uv}$ can appear together with a pair of variables of the same index ($\bxi_u \xi_u$ or $\bxi_v \xi_v$) or with a pair of variables with different indices ($\bxi_u \xi_v$ or $\bxi_v \xi_u$). We will now count graphically each appearance, encoding it with a different type of arrow (illustrated in Figure~\ref{fig:arrow}).
\begin{figure}
    \centering
    \includegraphics[width=0.5\linewidth]{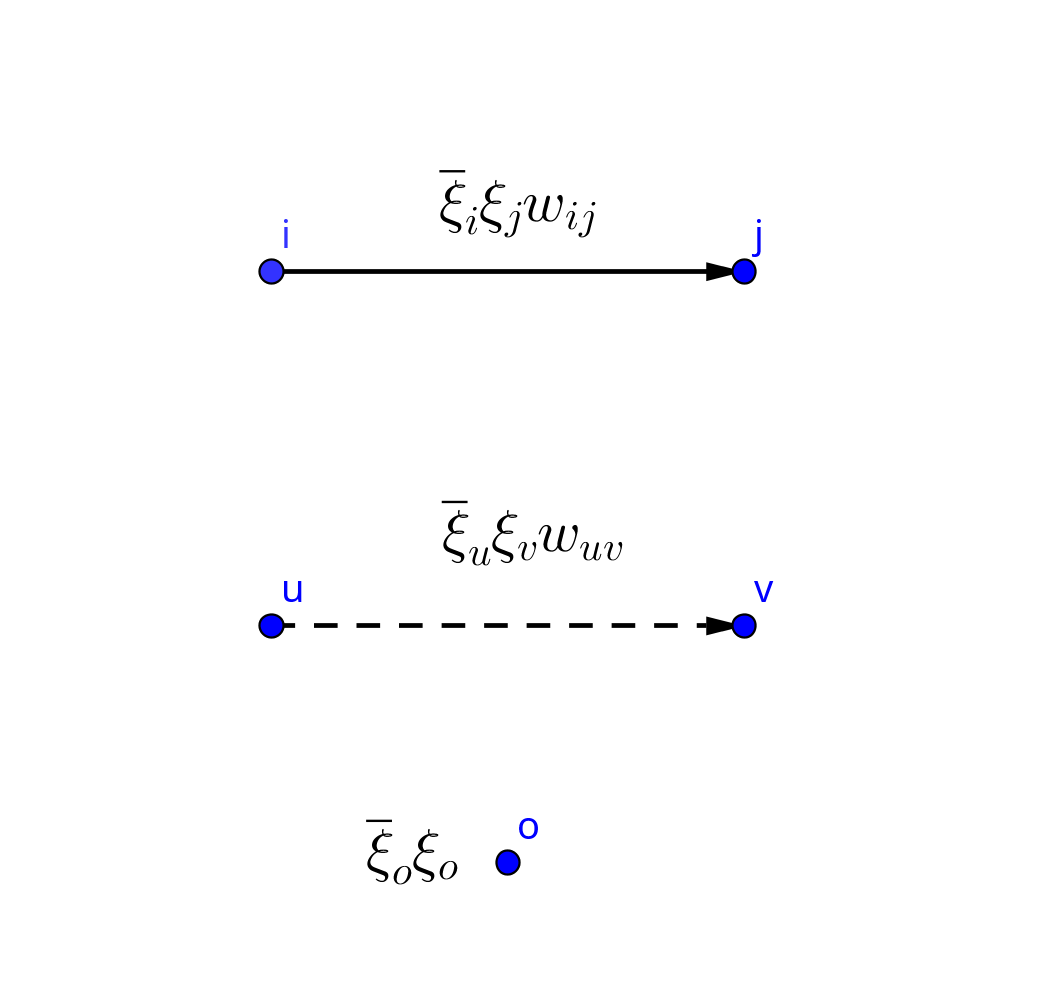}
    \caption{Arrow encoding for the proof of Theorem~\ref{thm:mattree}.}
    \label{fig:arrow}
\end{figure}

If $w_{uv}$ appears in the first instance, we will draw a solid arrow going from $u$ to $v$. If $w_{uv}$ appears in the second instance, we will draw a dashed arrow from $u$ to $v$. For $\bxi_o \xi_o$, which appears in the integrand as well, we will draw no in- or outgoing arrow from $o$. With this encoding, every polynomial in the left-hand side of~\eqref{eq:intO} that has a pair of variables at the same site will have a corresponding outgoing arrow from there, while for a pair of fields with one ``real'' variable and one ``conjugate'' variable the arrow will be dashed (directed from the ``conjugate'' site to the ``real'' one). 

By~\eqref{eq:int_rule} one must have exactly one conjugate variable per site. If a site is visited by the tail of a solid arrow, like node $k$ in Figure~\ref{fig:arrow_three}, or it is $o$, the pair of variables is already complete, so this vertex can only be visited by an arbitrary number of solid arrows, that bring no variable to the head. If a site is visited by the tail of a dashed arrow, like node $j$ in Figure~\ref{fig:arrow_three}, to complete the pair of variables it must also be visited by a dashed arrow-head, and by arbitrarily many solid arrow-heads.
\begin{figure}[ht!]
\definecolor{qqqqff}{rgb}{0.,0.,1.}
\definecolor{ttttff}{rgb}{0.2,0.2,1.}
\definecolor{cqcqcq}{rgb}{0.7529411764705882,0.7529411764705882,0.7529411764705882}
\centering
\begin{tikzpicture}[line cap=round,line join=round,>=triangle 45,x=1.0cm,y=1.0cm]
\clip(-5.32509977827051,-14.2525942350332) rectangle (7.923680709534364,-3.9306430155210466);
\draw [->,line width=1.6pt,dash pattern=on 5pt off 5pt] (-2.,1.) -- (2.,1.);
\draw [->,line width=1.6pt] (0.8602660753880247,-7.559911308203959) -- (-1.0129046563192916,-7.872106430155189);
\draw [->,line width=1.6pt] (-2.417782705099779,-6.252594235033241) -- (-1.0129046563192916,-7.872106430155189);
\draw [->,line width=1.6pt] (8.9236807095343653,-6.155033259423486) -- (5.,-8.);
\draw [->,line width=1.6pt] (-1.0129046563192916,-7.872106430155189) -- (-1.,-9.);
\draw [->,line width=1.6pt,dash pattern=on 5pt off 5pt] (5.,-8.) -- (5.016363636363633,-9.276984478935672);
\draw [->,line width=1.6pt] (3.767583148558755,-11.442838137472236) -- (1.8749002217294877,-12.203813747228331);
\draw [->,line width=1.6pt] (-0.4080266075388041,-10.350155210642972) -- (1.8749002217294874,-12.203813747228331);
\draw [->,line width=1.6pt] (-0.02124168514412259,-13.496496674057598) -- (1.8749002217294877,-12.203813747228331);
\draw [->,line width=1.6pt,dash pattern=on 5pt off 5pt] (3.0066075388026583,-7.442838137472252) -- (5.,-8.);
\begin{scriptsize}
\draw [fill=qqqqff] (0.,-1.) circle (2.5pt);
\draw[color=qqqqff] (-5.227538802660754,-3.716008869179584) node {$o_1$};
\draw [fill=qqqqff] (-1.0129046563192916,-7.872106430155189) circle (2.5pt);
\draw[color=qqqqff] (-0.8763192904656333,-7.520886917960057) node {$k$};
\draw [fill=qqqqff] (5.,-8.) circle (2.5pt);
\draw[color=qqqqff] (5.0368070953404,-7.657472283813715) node {$j$};
\draw [fill=qqqqff] (1.8749002217294877,-12.203813747228331) circle (2.5pt);
\draw[color=qqqqff] (2.011485587583146,-11.852594235033209) node {$o$};
\end{scriptsize}
\end{tikzpicture}
\caption{Skematic illustration of arrow configurations for Theorem~\ref{thm:mattree}.}
\label{fig:arrow_three}
\end{figure}
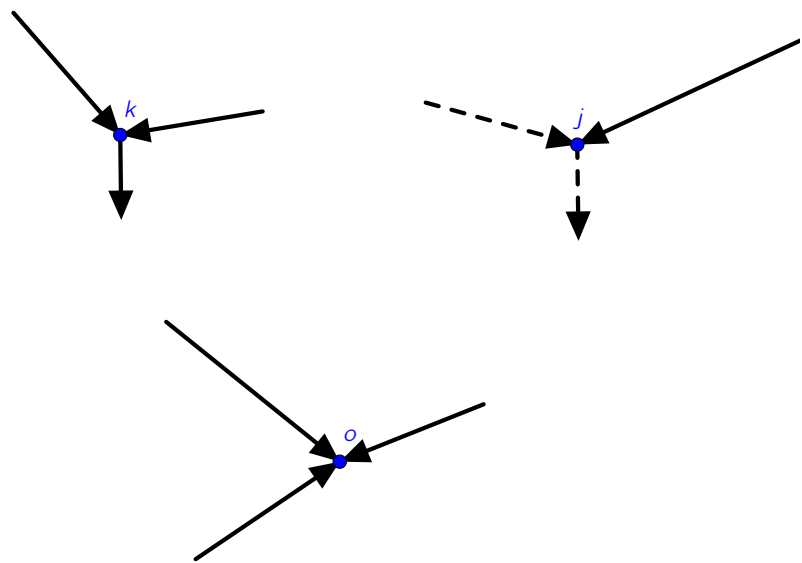
We therefore deduce two statements:
\begin{enumerate}[leftmargin=*]
    \item dashed arrows come in closed, self-avoiding circuits (since for each outgoing arrow there is one and only one incoming arrow);
    \item solid arrows create a subgraph of $G$ whose connected components have the following property. For each connected component, there must be a ``root structure'' such that, for each vertex in the component, either the vertex lies in the root structure or there is a single path which connects it to the root, touching it only at the last vertex. This path is oriented towards the root structure, in other words it is a tree rooted at the ``root structure''.
\end{enumerate}
We need know to understand what kind of root structures we can have. The possibilities are:
\begin{enumerate}[label=(\roman*),ref=(\roman*)]
    \item the vertex $o$;
    \item\label{typeii} a closed oriented circuit of dashed arrows;
    \item\label{typeiii} a closed oriented circuit of solid arrows.
\end{enumerate}
We now show that a polynomial appearing in the integral of~\eqref{eq:intO} giving rise to a root structure of type~\ref{typeii} cancels exactly the integral contribution of a polynomial with a root structure of type~\ref{typeiii}. Indeed, if there exists an oriented cycle of dashed arrows on $\ell$ vertices, which without loss of generality we call $\{1,\,2,\,\ldots,\,\ell\}$, then the associated polynomial must be of the form
\begin{align}
    \left(-\bxi_1\xi_2 w_{12}\right) \left(-\bxi_2\xi_3 w_{23}\right)\cdots \left(-\bxi_\ell\xi_1 w_{1\ell}\right)&\stackrel{\eqref{eq:AC}}{=}-\bxi_1 w_{12} \left(\bxi_2\xi_2 w_{23}\right)\cdots \left(\bxi_\ell\xi_\ell w_{1\ell}\right)\xi_1\nonumber\\
    &=-\left(-\bxi_1\xi_1 w_{12}\right) \left(\bxi_2\xi_2 w_{23}\right)\cdots\left(\bxi_\ell\xi_\ell w_{1\ell}\right)\label{eq:cycle}
\end{align}
where in the last equality we have used that the monomials $\bxi_i\xi_i$ are even. The proof is finished once one notices that the right-hand side of~\eqref{eq:cycle} is of type~\ref{typeiii}.
\end{proof}
Thanks to Proposition \ref{pro:gau_int}, we can define the Gaussian integration on $\GA$ via its moments  as
\[
\left< F \right> \coloneqq (\det(O))^{-1} \int \Diff\e^{(\boxbxi,\,O\boxi)} F
\]
for all $F\in\GA$ and $\{\xi_i,\bxi_i\}_{i\in V}$. In view of Theorem \ref{thm:mattree}, this means that
\[
\left< 1 \right> =1
\]
which intuitively justifies the idea of ``Gaussian probability measure'' on the Grassmann algebra.

In what follows we find a deeper relation between the $\prob$-probability to have some fixed edges in a spanning tree and the expectation $\left< \cdot \right>$ of some fermionic polynomials. We will use throughout this Section the notation $O=-\Delta_{o^\c,\,o^\c}$, and use $O^{-1}$ to denote its inverse, which exists by Theorem~\ref{thm:mattree}. 

Fix an arbitrary orientation of the edges $E$. We will denote an oriented edge as $\vec{xy}$. Even if we have never used oriented edges before, one can prove that this choice does not matter towards the next result (and we will indeed give a ``fermionic'' explanation for this). We let
\begin{equation}\label{eq:transf-current}
T(\vec{xy},\vec{uv}) = O^{-1} (x,\,u) - O^{-1} (y,\,u) - O^{-1} (x,\,v) + O^{-1} (y,\,v),
\end{equation}
be the transfer-impedance matrix (see~\cite[Section 4]{Burton1993Jul}). A classical result of~\cite[Theorem 4.2]{Burton1993Jul} states that the edge probabilities of the uniform spanning tree can be expressed as determinant of the transfer-impedance matrix.
\begin{theorem}[Burton-Pemantle theorem]\label{thm:B-P}
For any finite collection of disjoint undirected edges $e_1,...,e_k\in E$,
\[
\prob(e_1,...,e_k\in t) = \det\left(T(e_i,e_j)_{i,j=1}^k\right)
\]
for $t$ a spanning tree of $G$, where the matrix $T$ is defined in \eqref{eq:transf-current}.
\end{theorem}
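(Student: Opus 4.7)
The plan is to apply Wick's theorem (Theorem~\ref{thm:sportiello}~\ref{thm_Wick_two}) with matrices chosen so that the right-hand side is already $\det\bigl(T(e_i,\,e_j)\bigr)$, and then to identify the resulting Grassmann integral with $T_G\cdot\prob(e_1,\dots,e_k\in t)$ via a change-of-variables / graph-contraction argument. Fixing the orientation $e_i=\vec{x_iy_i}$ for each $i$, I define the $k\times(\#V-1)$ matrix $B$ by $B_{i,z}=\delta_{z,x_i}-\delta_{z,y_i}$ and the $(\#V-1)\times k$ matrix $C$ by $C_{z,i}=\delta_{z,x_i}-\delta_{z,y_i}$, so that $(B\boxi)_i=\xi_{x_i}-\xi_{y_i}$ and $(\boxbxi^T C)_i=\bxi_{x_i}-\bxi_{y_i}$. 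A one-line computation gives
\[
(BO^{-1}C)_{ij}=O^{-1}(x_i,\,x_j)-O^{-1}(y_i,\,x_j)-O^{-1}(x_i,\,y_j)+O^{-1}(y_i,\,y_j)=T(e_i,\,e_j),
\]
and Theorem~\ref{thm:sportiello}~\ref{thm_Wick_two} with $A=O$ then yields
\[
\int\Diff\prod_{i=1}^{k}(\bxi_{x_i}-\bxi_{y_i})(\xi_{x_i}-\xi_{y_i})\,\e^{(\boxbxi,\,O\boxi)}=\det(O)\,\det\bigl(T(e_i,\,e_j)\bigr).
\]
Since $\det(O)=T_G$ by Theorem~\ref{thm:mattree}, the claim reduces to showing that the left-hand side equals $T_G\cdot\prob(e_1,\dots,e_k\in t)$.

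To establish this identification I propose a Grassmann change of variables: replace $(\bxi_{x_i},\,\xi_{x_i})$ by $(\bxi'_{x_i},\,\xi'_{x_i}):=(\bxi_{x_i}-\bxi_{y_i},\,\xi_{x_i}-\xi_{y_i})$ for each $i$, leaving all other generators unchanged. When the edges $e_i$ are vertex-disjoint this substitution is upper-triangular with unit Jacobian, so the Berezin measure is preserved. In the new variables the insertion $F_{e_i}:=(\bxi_{x_i}-\bxi_{y_i})(\xi_{x_i}-\xi_{y_i})$ becomes the simple monomial $\bxi'_{x_i}\xi'_{x_i}$. Splitting the quadratic form as $(\boxbxi,\,O\boxi)=Q_{\mathrm{rest}}+R$, where $R$ collects all the quadratic terms containing at least one primed generator, nilpotency forces $F_{e_1}\cdots F_{e_k}\cdot\e^{R}=F_{e_1}\cdots F_{e_k}$, since every term produced by the Taylor expansion of $\e^R$ carries a primed generator and is annihilated upon multiplication by the insertion. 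The primed integrations then factor out with $\int\De\xi'_{x_i}\De\bxi'_{x_i}\,\bxi'_{x_i}\xi'_{x_i}=1$ each, leaving a Berezin integral of $\e^{Q_{\mathrm{rest}}}$ over the unprimed generators indexed by $V\setminus(\{o\}\cup\{x_1,\dots,x_k\})$.

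A direct computation identifies $Q_{\mathrm{rest}}$ with the quadratic form associated with the reduced Laplacian of the contracted graph $G/\{e_1,\dots,e_k\}$ based at $o$: the off-diagonal entries aggregate the weights between vertex-clusters, while the diagonal entry at the merged vertex $[x_iy_i]$ equals $\deg(x_i)+\deg(y_i)-2w_{x_iy_i}$, which is precisely its degree in $G/e_i$. Applying Theorem~\ref{thm:mattree} to $G/\{e_1,\dots,e_k\}$, the residual integral then counts the spanning trees of the contracted graph, which are in bijection with the spanning trees of $G$ containing all the $e_i$. Dividing by $T_G=\det(O)$ produces $\prob(e_1,\dots,e_k\in t)$, as desired. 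The main technical obstacle is precisely this bookkeeping of $Q_{\mathrm{rest}}$—in particular the accounting of the diagonal entries at merged vertices—together with the extensions to edges sharing a vertex (handled by iterating the change of variables along a spanning subforest of $\{e_1,\dots,e_k\}$) and to the degenerate case where these edges contain a cycle, in which both sides vanish: the LHS because no spanning tree can contain such a cycle, and the RHS by linear dependence of the corresponding rows of $B$ viewed as $1$-chains in the cycle space of $G$.
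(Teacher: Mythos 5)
Your argument is correct in substance, but it follows a route the paper does not take: the paper never proves Theorem~\ref{thm:B-P} at all --- it imports it as a classical result of Burton and Pemantle --- and its own Grassmannian computation nearby (Lemma~\ref{lmm:LTG}) only identifies the fermionic correlation $\langle\prod_{f}\zeta_f\rangle$ with $\det\left(T(f,f')\right)$ via Theorem~\ref{thm:sportiello}, leaving the probabilistic meaning of that determinant to the citation. You make the statement self-contained: your first step is Lemma~\ref{lmm:LTG} (Wick's theorem with the $\pm1$ incidence matrices $B$ and $C$, yielding $\det(O)\det(T(e_i,e_j))$, modulo the weight factors discussed below), and your second step evaluates the same Berezin integral a second way --- the unimodular substitution $\xi_{x_i}\mapsto\xi_{x_i}-\xi_{y_i}$, $\bxi_{x_i}\mapsto\bxi_{x_i}-\bxi_{y_i}$, the nilpotency argument that kills $\e^{R}$ against the insertion, and the identification of $Q_{\mathrm{rest}}$ with the reduced Laplacian of $G/\{e_1,\dots,e_k\}$ --- which is in effect a rerun of the paper's Grassmannian proof of Theorem~\ref{thm:mattree} on the contracted graph, so that $\det(O)\det(T)=T_{G/\{e_1,\dots,e_k\}}=T_G\cdot\prob(e_1,\dots,e_k\in t)$ by the deletion--contraction bijection. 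What this buys is a purely fermionic derivation of Burton--Pemantle from the matrix-tree theorem, making the subsequent edge-representation theorem independent of the external reference; what it costs is exactly the bookkeeping you flag (shared endpoints handled along a spanning forest of the $e_i$'s, the cycle case where both sides vanish, and the degenerate case where an endpoint equals the root $o$, in which the difference variable collapses to a single generator), all of which are routine and correctly sketched. One caveat worth recording: your insertion omits the edge weights, so what you actually prove is $\det(T)=T_{G/\{e_1,\dots,e_k\}}/T_G$, which coincides with $\prob(e_1,\dots,e_k\in t)$ in the unit-weight (uniform) setting of the printed statement; for general weights the contraction identity produces the extra factor $\prod_i w_{e_i}$ --- precisely the reason the paper's $\zeta_e$ in \eqref{eq:zeta--variables} carries the prefactor $w_{uv}$ --- but this is a convention ambiguity already present in the statement and in the surrounding text, not a flaw in your argument.
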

Define now, for any edge $e=(uv)\in E$,
\begin{equation}\label{eq:zeta--variables}
\zeta_e \coloneqq w_{uv}(\bxi_u - \bxi_v) (\xi_u - \xi_v).
\end{equation}
Note that the definition of $\zeta_e$ is independent of the orientation of $e$.
\begin{lemma}[{\cite[Lemma 4.5]{chiarini2023fermionicgaussianfreefield}}]\label{lmm:LTG}
If $\emptyset\neq F\subseteq E$, then
\[
\left< \prod_{f\in F} \zeta_f \right> = \det\left(T(f,f')_{f,f'\in F}\right).
\]
\end{lemma}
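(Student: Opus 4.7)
The plan is to apply the second part of Wick's theorem (Theorem~\ref{thm:sportiello}, item~\ref{thm_Wick_two}) directly to the integrand $\prod_{f\in F}\zeta_f\cdot\e^{(\boxbxi,\,O\boxi)}$, by recognizing $\prod_{f\in F}\zeta_f$ as a product of $|F|$ bilinear factors of the form $(\boxbxi^T C)_\alpha(B\boxi)_\alpha$, which is the exact structure the theorem requires.

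First I would fix an arbitrary orientation $\vec{u_f v_f}$ of each $f\in F$ and decompose
\[
\zeta_f = \bigl[w_{u_f v_f}(\bxi_{u_f} - \bxi_{v_f})\bigr]\bigl[\xi_{u_f} - \xi_{v_f}\bigr],
\]
i.e.\ as the product of a $\boxbxi$-linear form by a $\boxi$-linear form. This motivates introducing a $|F|\times(|V|-1)$ matrix $B$ and a $(|V|-1)\times|F|$ matrix $C$, with rows/columns indexed by $V\setminus\{o\}$, whose row resp.\ column for $f$ is supported on $\{u_f,v_f\}\setminus\{o\}$ with entries $\pm 1$ for $B$ and $\pm w_{u_f v_f}$ for $C$. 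Under the convention $\xi_o=\bxi_o=0$ one then checks $(\boxbxi^T C)_f(B\boxi)_f=\zeta_f$, so Wick's theorem gives
\[
\int\Diff\prod_{f\in F}\zeta_f\,\e^{(\boxbxi,\,O\boxi)} = \det(O)\det(BO^{-1}C),
\]
whence $\langle\prod_{f\in F}\zeta_f\rangle=\det(BO^{-1}C)$. The remaining step is routine: expanding the matrix product using the two-term support of $B$'s rows and $C$'s columns, the $(f,f')$-entry of $BO^{-1}C$ collapses into exactly the four-term combination of entries of $O^{-1}$ appearing in~\eqref{eq:transf-current}, and so equals $T(f,f')$ (up to the edge-weight convention).

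I expect the main delicate point to be the bookkeeping around the root $o$: since $\xi_o,\bxi_o$ are not generators of the Grassmann algebra and both the integration $\Diff$ and the inverse $O^{-1}$ live on $V\setminus\{o\}$, the matrices $B$ and $C$ must be assembled only after the rows and columns indexed by $o$ have been dropped, which is what the convention $\xi_o=\bxi_o=0$ is really encoding. Once this is in place, orientation-independence (commented on but not proved in the paper for $T$) becomes transparent on the left-hand side as well: reversing the orientation of $f$ flips the sign of both bracketed factors in the decomposition of $\zeta_f$ and simultaneously flips the signs of the $f$-row and $f$-column of $T$, so both sides remain invariant.
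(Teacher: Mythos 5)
Your proposal is correct and follows essentially the same route as the paper's proof: orient the edges, write $\prod_{f\in F}\zeta_f$ as $\prod_{\alpha}(\boxbxi^T C)_\alpha(B\boxi)_\alpha$ with incidence-type matrices $B,C$ supported on $V\setminus\{o\}$, invoke item~\ref{thm_Wick_two} of Theorem~\ref{thm:sportiello} to get $\det(O)\det(BO^{-1}C)$, and identify $(BO^{-1}C)(f,f')=T(f,f')$. The only cosmetic difference is where you place the weights $w_{u_fv_f}$ (in $C$ rather than leaving $B=C^T$ with $\pm1$ entries as the paper does), which is exactly the ``edge-weight convention'' caveat you already flag and matches the paper's implicit unweighted normalization of $T$.
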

\begin{proof}
Without loss of generality let us assume that $F=\{f_1,\,\ldots, \,f_k\}$, $k\geq 1$, and each edge $f_i$ is oriented as $\overrightarrow{u_i v_i}$.
 We observe that
\begin{equation*}	
	\left(\bxi_{u_i}-\bxi_{v_i}:\,i=1,\,\ldots,\,k\right) =
	\boxbxi^{T} C,
	\quad
	\left(\bxi_{u_i}-\bxi_{v_i}:\,i=1,\,\ldots,\,k\right) = {B}\boxi,
\end{equation*}
where $ B= C^T$ and $ C$ is a $ (\#V-1)\times k$ matrix such that the column corresponding to the $i$-th point is given by 
\[ 
 C(\cdot,\,i) = (0,\,\dots,\,0,\,-1,\,0,\,\dots,\,0,\,1,\,0,\,\dots,\,0)^T , 
\]
with the $-1$ (resp. $1$) located at the $v_i$-th position (resp. at the $u_{i}$-th position).
Therefore,
\begin{align}   \label{eq:aux_two}
	\left< \prod_{f\in F} \zeta_f \right>=
	\int\Diff
	\left(\prod_{\alpha=1}^k\left(\boxbxi^{T} C\right)_\alpha \left({B}\boxi\right)_\alpha\right)
	\exp\left(\boxbxi,\,O\boxi\right).
\end{align} 
The lemma now follows from item \ref{thm_Wick_two} of Theorem~\ref{thm:sportiello} and the computation
\[
\left( BO^{-1} C \right) (f,\,g) = T(f,\,g)
\]
for $f,\,g \in E$.
\end{proof}
As a comment to this lemma, note that the independence of $\zeta$ from the direction of the edges explains why the determinant of the transfer matrix, as already mentioned, is not influenced by our arbitrary choice of the edge orientation.

The following Theorem is the Grassmannian analogue of~\cite[Corollary 4.4]{Burton1993Jul}, which is stated without proof in~\cite[Equation (3)]{Jan2019Feb} and proven in~\cite[Proposition 3.6]{Rapoport2023Dec}.
\begin{theorem}[Edge-representation with fermions]
For any $\emptyset\neq F,\,F'\subseteq E$ such that $F\cap F'=\emptyset$, 
\[
\prob(F\in t, F'\cap t = \emptyset) = \left< \prod_{f\in F} \zeta_f \prod_{f'\in F'} (1-\zeta_{f'}) \right>.
\]
\end{theorem}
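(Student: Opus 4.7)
The plan is to combine the binomial expansion of the product, the linearity of $\langle\cdot\rangle$, Lemma~\ref{lmm:LTG}, and the Burton--Pemantle theorem (Theorem~\ref{thm:B-P}), and then to recognize the result as a standard inclusion-exclusion identity for the event $\{F\subseteq t,\, F'\cap t=\emptyset\}$.

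First I would observe that each $\zeta_f$ is an even polynomial in the generators (a product of two odd factors), so the family $\{\zeta_f\}_{f\in E}$ consists of pairwise commuting elements of $\GA$. Consequently we may expand
\[
\prod_{f'\in F'}(1-\zeta_{f'}) = \sum_{F''\subseteq F'}(-1)^{|F''|}\prod_{f''\in F''}\zeta_{f''}
\]
without worrying about the order of the factors. Applying the linear functional $\langle\cdot\rangle$ and using that $F\cap F''\subseteq F\cap F'=\emptyset$ (so $F\cup F''$ is a disjoint union), I would write
\[
\left\langle\prod_{f\in F}\zeta_f\prod_{f'\in F'}(1-\zeta_{f'})\right\rangle = \sum_{F''\subseteq F'}(-1)^{|F''|}\left\langle\prod_{f\in F\cup F''}\zeta_f\right\rangle.
\]

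Next, Lemma~\ref{lmm:LTG} applied to the edge set $F\cup F''$ turns each summand into a determinant of the transfer-impedance matrix, and Theorem~\ref{thm:B-P} identifies this determinant with $\prob(F\cup F''\subseteq t)$:
\[
\left\langle\prod_{f\in F\cup F''}\zeta_f\right\rangle = \det\bigl(T(f,f')_{f,f'\in F\cup F''}\bigr) = \prob\bigl(F\cup F''\subseteq t\bigr).
\]
Substituting back yields
\[
\left\langle\prod_{f\in F}\zeta_f\prod_{f'\in F'}(1-\zeta_{f'})\right\rangle = \sum_{F''\subseteq F'}(-1)^{|F''|}\prob(F\cup F''\subseteq t).
\]

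Finally, I would recognize the right-hand side as the inclusion-exclusion expansion of $\prob(F\subseteq t,\,F'\cap t=\emptyset)$. Indeed, conditionally on $F\subseteq t$, writing $\prob(F'\cap t=\emptyset\mid F\subseteq t)=\prob(\bigcap_{f'\in F'}\{f'\notin t\}\mid F\subseteq t)$ and applying the standard inclusion-exclusion to the events $\{f'\in t\}$, $f'\in F'$, gives exactly the sum on the right. I do not anticipate a serious obstacle: the only subtlety is the commutativity of the $\zeta_f$'s, which is needed so that the binomial expansion is unambiguous and so that $\prod_{f\in F}\zeta_f\prod_{f''\in F''}\zeta_{f''}$ coincides with $\prod_{f\in F\cup F''}\zeta_f$ up to sign irrelevant issues (in fact the expression is manifestly order-free because all $\zeta_f$'s are even).
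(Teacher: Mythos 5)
Your proposal is correct and follows essentially the same route as the paper: binomial expansion of $\prod_{f'\in F'}(1-\zeta_{f'})$, evaluation of each term via Lemma~\ref{lmm:LTG} and Theorem~\ref{thm:B-P}, and an inclusion-exclusion identification of the alternating sum. The only (cosmetic) difference is that you apply inclusion-exclusion directly to the event $\{F\subseteq t,\,F'\cap t=\emptyset\}$, which avoids the paper's case split on whether $\prob(F\subseteq t)$ vanishes, and your remark that the even elements $\zeta_f$ commute correctly justifies the reordering that the paper uses implicitly.
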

\begin{proof}
    Note that, since
    \[
    \prod_{f'\in F'} (1-\zeta_{f'}) = \sum_{\gamma\subseteq F'} (-1)^{|\gamma|} \prod_{f'\in F'} \zeta_{f'},
    \]
    we can write
    \begin{equation}\label{eq:comp1}
    \left< \prod_{f\in F} \zeta_f \prod_{f'\in F'} (1-\zeta_{f'}) \right> = \sum_{\gamma\subseteq F'} (-1)^{|\gamma|} \left< \prod_{f\in F} \zeta_f \prod_{f'\in F'} \zeta_{f'} \right>.
    \end{equation}
    By Lemma \ref{lmm:LTG} and Theorem \ref{thm:B-P}, we have that \eqref{eq:comp1}, for $t$ a spanning tree, reduces to
    \begin{equation}\label{eq:comp2}
    \sum_{\gamma\subseteq F'} (-1)^{|\gamma|} \prob(F\cup\gamma\in t) = \prob(F\cup\gamma\in t) - \sum_{\#\gamma=1, \atop \gamma\subseteq F'} \prob(F\cup\gamma\in t) + \sum_{\#\gamma=2, \atop \gamma\subseteq F'} \prob(F\cup\gamma\in t) + \dots
    \end{equation}
    Now we consider separately the two following cases:
    \begin{itemize}
        \item[(i)] $\prob(F\subseteq t)=0$;
        \item[(ii)] $\prob(F\subseteq t)\neq0$.
    \end{itemize}

     \medskip
    {\bf Case (i).} Note that in this case \eqref{eq:comp2} equals 0, which proves our claim.
    
     \medskip
    {\bf Case (ii).} We can now divide each term of \eqref{eq:comp2} by $\prob(F\subseteq t)$. Thus,
    \begin{align}
    \dfrac{1}{\prob(F\subseteq t)} \left< \prod_{f\in F} \zeta_f \prod_{f'\in F'} (1-\zeta_{f'}) \right>
    &= 1 - \prob(\exists \ \emptyset\neq\gamma\subseteq F' : \gamma\subseteq t | F\subseteq t) \nonumber \\
    &= \prob\left((\exists \ \emptyset\neq\gamma\subseteq F' : \gamma\subseteq t)^c | F\subseteq t\right) \nonumber \\
    &= \prob(F'\cap t =\emptyset| F\subseteq t),  \label{eq:comp3}
    \end{align}
    where the second equality follows from the inclusion/exclusion principle. To conclude, it suffices to multiply both terms of \eqref{eq:comp3} by $\prob(F\subseteq t)$.
\end{proof}
\subsection{Real and complex Gaussians}
We will now give a brief recap of Gaussian integration in complex variables. We will restrict ourselves to centered vectors but a more general theory can of course be studied, as for example in the reference~\cite{complex} where the results we are presenting are proved. 

Let $A\in M_n(\C)$ be an Hermitian positive-definite matrix with complex entries.
\begin{definition}[Complex Gaussian random vector]
  A random vector ${\boldsymbol{Z}}=(Z_1,\,\ldots,\,Z_n)\in\C^n$ has a complex Gaussian distribution with mean zero and inverse covariance matrix $A$ if it has a density equal to
  \[
  |\det(A)|\exp\left(-\left(\overline{\bf Z},\,A{\bf Z}\right)\right)
  \]
  with respect to the measure
  \[
  \prod_{\alpha=1}^n\f{\De \Re(Z_\alpha)\De \Im(Z_\alpha)}{\pi}.
  \]
  We thus have 
  \[
  A^{-1}=\Exp\left[\bf Z\overline{\bf Z}^T\right],
  \]
  in other words for all $1\leq i,\,\,j\leq n$ it holds that
  \begin{align*}
      &\Exp[Z_i Z_j]=0\\
      &\Exp[Z_i \overline{Z}_j]=A^{-1}(i,\,j)=\overline{A^{-1}(j,\,i)}\\
      &\Exp[|Z_i|^2]=A^{-1}(i,\,i).
  \end{align*}
  Equivalently, for all $1\leq i\leq n$ one has $Z_i=X_i+\iu Y_i$, where $({\bf X},\,{\bf Y})$ is a $2n$-dimensional Gaussian vector with
  \begin{align*}
  \left(\bf X \atop \bf Y \right)\sim \mathcal N\left( \genfrac(){0pt}{0}{\bf 0}{\bf 0},\;\frac12\mat{\Re A^{-1} & -\Im A^{-1} \\ \Im A^{-1} & \Re A^{-1}}\right).
  \end{align*}
\end{definition}
\subsection{Cumulants of Gaussian vectors}
A characterisation of real univariate Gaussian random variable is that all its cumulants of order at least three vanish. More can be said about cumulants of Gaussian vectors, and we will now proceed to proving what the cumulants of the vector of their squares look like. Recall now the notation $\mathfrak S_n$ for the set of permutations of $n$ elements, and $\#\sigma$ to denote the number of cycles in the cyclic decomposition of $\sigma \in \mathfrak S_n.$ Both results are taken from~\cite{mccullagh2006permanental}.
\begin{proposition}[Cumulants of real Gaussian vector]\label{prop:cumreal}
   Let $S\subset\R^d$ be a bounded set. If $\varphi=(\varphi_x)_{x\in S}$ is a centered real Gaussian process with covariance matrix $\nicefrac{C}{2}\in M_n(\R)$, then for all $k\geq 1,\,x_1,\,\ldots,\,x_k\in S$ one has
   \[
   \kappa\left(\varphi_{x_1}^2,\,\ldots,\,\varphi_{x_k}^2\right)=\f12\cyp(C)(x_1,\,\ldots,\,{x_k})\coloneqq \f12\sum_{\sigma \in \mathfrak S_n:\,\#\sigma=1}\prod_{\alpha=1}^k C(x_\alpha,\,x_{\sigma(\alpha)}).
   \]
\end{proposition}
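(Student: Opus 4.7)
The plan is to compute the cumulant generating function of the squared process explicitly via a Gaussian integral, and then identify its Taylor coefficient at $t_1 \cdots t_k$ with the cyclic permanent.

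First I would restrict attention to the finite sub-vector $(\varphi_{x_1}, \ldots, \varphi_{x_k})$, whose covariance matrix is $\Sigma = \tfrac{1}{2}(C(x_i,x_j))_{i,j=1}^k$, assuming provisionally that $\Sigma$ is strictly positive definite (the general case follows by a density/continuity argument since both sides are polynomials in the entries of $\Sigma$). Writing $D(\mathbf t)$ for the $k \times k$ diagonal matrix with entries $t_1, \ldots, t_k$, the moment generating function of the squared vector is a Gaussian integral
\[
M(\mathbf t) = \E\bigl[\exp\bigl({\textstyle\sum_i} t_i \varphi_{x_i}^2\bigr)\bigr] = \int_{\R^k}\exp\bigl(-\tfrac{1}{2}\boxx^T(\Sigma^{-1}-2D(\mathbf t))\boxx\bigr)\frac{\mathrm{d}\boxx}{(2\pi)^{k/2}\det(\Sigma)^{1/2}},
\]
valid for $\mathbf t$ in a small neighborhood of the origin. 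The standard Gaussian integration formula then yields
\[
M(\mathbf t) = \det\bigl(I - 2D(\mathbf t)\Sigma\bigr)^{-1/2} = \det\bigl(I - D(\mathbf t) C\bigr)^{-1/2},
\]
where I slightly abuse notation and let $C$ also denote the $k \times k$ restriction $(C(x_i,x_j))_{i,j=1}^k$, so that $2\Sigma = C$.

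Second, I would take logarithms and expand as a power series. Using $\log\det = \operatorname{tr}\log$ together with $\log(I-M) = -\sum_{n\geq 1} M^n/n$, the cumulant generating function becomes
\[
K(\mathbf t) = \log M(\mathbf t) = \frac{1}{2}\sum_{n=1}^{\infty} \frac{1}{n}\operatorname{tr}\bigl((D(\mathbf t)C)^n\bigr).
\]
By Definition~\ref{def:cum_mult} the target cumulant $\kappa(\varphi_{x_1}^2,\ldots,\varphi_{x_k}^2)$ equals the coefficient of $t_1 t_2 \cdots t_k$ in $K(\mathbf t)$. Since every factor $D(\mathbf t)$ contributes exactly one $t_j$, only the $n=k$ term of the series can produce the multilinear monomial $t_1 \cdots t_k$, and we may discard all others.

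Finally, I would extract that coefficient. Expanding the trace,
\[
\operatorname{tr}\bigl((D(\mathbf t)C)^k\bigr) = \sum_{i_1,\dots,i_k} t_{i_1}\cdots t_{i_k}\, C(x_{i_1},x_{i_2}) C(x_{i_2},x_{i_3})\cdots C(x_{i_k},x_{i_1}),
\]
so the monomial $t_1\cdots t_k$ arises precisely from those tuples $(i_1,\ldots,i_k)$ which are permutations of $(1,\ldots,k)$. Each such cyclic word determines, and is determined by, a single-cycle permutation $\rho \in \mathfrak S_k$ via $i_{j+1} = \rho(i_j)$, and every single-cycle $\rho$ is realised by exactly $k$ ordered tuples (its $k$ cyclic rotations). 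Combined with the $1/k$ prefactor from the log expansion, this gives precisely one copy of $\prod_{\alpha=1}^k C(x_\alpha, x_{\rho(\alpha)})$ per single-cycle $\rho$; multiplying by the overall $1/2$ gives the claimed identity. The only real subtlety is this combinatorial bookkeeping in the last step—matching cyclic rotations of an ordered sequence to the conjugacy orbit of a standard $k$-cycle in $\mathfrak S_k$, which is exactly what cancels the $1/k$ and produces the cyclic permanent. Everything else reduces to routine Gaussian integration.
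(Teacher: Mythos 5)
Your proof is correct, and it takes a genuinely different route from the paper's. The paper argues combinatorially: it invokes Malyshev's formula expressing the cumulant of the products $\varphi_{x_i}^2$ as a sum over connected partitions of the $2k$ Gaussian factors, uses that Gaussian cumulants beyond order two vanish, and then counts that each single-cycle permutation is realised by $2^{k-1}$ admissible partitions, each contributing $\prod_\alpha C(x_\alpha,x_{\sigma(\alpha)})/2^k$. You instead compute the cumulant generating function in closed form, $K(\mathbf t)=-\tfrac12\log\det\left(I-D(\mathbf t)C\right)=\tfrac12\sum_{n\ge1}\tfrac1n\operatorname{tr}\left((D(\mathbf t)C)^n\right)$, and extract the coefficient of $t_1\cdots t_k$; your bookkeeping at the end is right (each $k$-cycle corresponds to exactly $k$ cyclic rotations of the index word, which cancels the $1/k$, and only $n=k$ is multilinear), and your checks match, e.g.\ $\kappa(\varphi_{x_1}^2,\varphi_{x_2}^2)=\tfrac12 C(x_1,x_2)^2$. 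What each approach buys: yours is self-contained, needing only the Gaussian moment generating function of a quadratic form and the $\log\det=\operatorname{tr}\log$ expansion, at the price of provisional assumptions (nondegeneracy of $\Sigma$ and smallness of $\mathbf t$) that you correctly dispose of by continuity/analyticity — note this also covers repeated points $x_i=x_j$, where $\Sigma$ is singular. The paper's argument is purely algebraic in the moments, so it needs no positivity or convergence discussion and makes the ``connected $=$ single cycle'' structure transparent, but it leans on the cited machinery for generalized cumulants and on a terse $2^{k-1}$-partition count that your derivation sidesteps entirely; your determinant formula also explains at a glance the companion complex case (Proposition on complex Gaussian vectors), where the exponent $-\nicefrac12$ becomes $-1$ and the factor $\tfrac12$ disappears.
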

\begin{proof}
    The desired result follows from Malyshev's formula~\cite[Equation (3.2.8)]{peccati2011wiener} 
    for generalized cumulants as a sum of products of ordinary cumulants. Note that the sum is restricted to cyclic permutations only. All Gaussian cumulants are zero except those of order two, so the result is a sum of products of covariances in the form $C(x_{i_1},\,x_{j_1})\cdots C(x_{i_k},\,x_{j_k})$. Since each value $1,...,k$ occurs once as a first index and once as a second index, $(j_1,...,j_k)$ is a permutation of $(i_1,...,i_k)$. For each cyclic permutation, there are $2^{k-1}$ distinct partitions of the $2k$ indices that satisfy the connectivity condition, all giving rise to the same contribution $C(x_{1},\,x_{\sigma(1)})\cdots C(x_{k},\,x_{\sigma(k)})/2^k$. as a consequence the joint cumulant is one-half the sum of the cyclic products.
\end{proof}
\begin{proposition}[Cumulants of complex Gaussian vector]\label{prop:cumcompl}
   Let $S\subset\R^d$ be a bounded set. If ${\bf{Z}}=(Z_x)_{x\in S}$ is a centered complex Gaussian process with covariance matrix ${C}\in M_n(\C)$, then for all  $k\geq 1,\,x_1,\,\ldots,\,x_k\in S$ one has
   \[
   \kappa\left(\left|Z_{x_1}\right|^2,\,\ldots,\,\left|Z_{x_k}\right|^2\right)=\cyp(C)(x_1,\,\ldots,\,{x_k})=\sum_{\sigma \in \mathfrak S_n:\,\#\sigma=1}\prod_{\alpha=1}^k C(x_\alpha,\,x_{\sigma(\alpha)}).
   \]
\end{proposition}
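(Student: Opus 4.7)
The plan is to imitate the proof of Proposition~\ref{prop:cumreal} but exploiting the crucial structural difference between real and complex Gaussians, namely that $\Exp[Z_i Z_j] = \Exp[\bxi_i\bxi_j$-type$] = 0$ while $\Exp[Z_i\overline{Z}_j] = C(i,j)$. First I would rewrite $|Z_{x_\alpha}|^2 = Z_{x_\alpha}\overline{Z_{x_\alpha}}$ for each $\alpha$, so that we are asked to compute a \emph{generalized cumulant}: the joint cumulant of $k$ products of two random variables each. Then I would apply Malyshev's formula (the same tool used in the real case, see~\cite[Equation (3.2.8)]{peccati2011wiener}), which expresses
\[
\kappa\left(Z_{x_1}\overline{Z_{x_1}},\,\ldots,\,Z_{x_k}\overline{Z_{x_k}}\right)
= \sum_{\pi} \prod_{B\in\pi}\kappa(Y_i:\,i\in B),
\]
where the sum runs over partitions $\pi$ of the $2k$ symbols $\{Z_{x_1},\overline{Z_{x_1}},\ldots,Z_{x_k},\overline{Z_{x_k}}\}$ whose join with the partition $\{\{Z_{x_\alpha},\overline{Z_{x_\alpha}}\}\}_{\alpha=1}^k$ is the trivial one-block partition (the ``connectivity'' condition), and the $Y_i$ symbols are the corresponding $Z$'s or $\overline{Z}$'s.

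Next I would use two vanishing facts to prune the sum drastically. Since $\bf Z$ is Gaussian, all cumulants of order $\geq 3$ vanish, so only pair-partitions $\pi$ can contribute. Furthermore, since $\Exp[Z_iZ_j]=\Exp[\overline{Z}_i\overline{Z}_j]=0$, only those pairings survive in which every block has one ``$Z$-symbol'' and one ``$\overline{Z}$-symbol''. Such pairings are in bijection with permutations $\sigma\in\mathfrak S_k$: the permutation $\sigma$ encodes that $Z_{x_\alpha}$ is paired with $\overline{Z_{x_{\sigma(\alpha)}}}$, contributing the factor $\Exp[Z_{x_\alpha}\overline{Z_{x_{\sigma(\alpha)}}}] = C(x_\alpha,x_{\sigma(\alpha)})$ to the product.

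The last step is to interpret the connectivity condition for such a $\sigma$-pairing. The partition corresponding to $\sigma$ joined with $\{\{Z_{x_\alpha},\overline{Z_{x_\alpha}}\}\}_\alpha$ identifies the indices $\alpha$ and $\sigma(\alpha)$ for every $\alpha$; this yields a single block exactly when $\sigma$ consists of a single cycle, i.e.\ $\#\sigma = 1$. Therefore
\[
\kappa\left(|Z_{x_1}|^2,\ldots,|Z_{x_k}|^2\right) = \sum_{\sigma\in\mathfrak S_k:\,\#\sigma=1}\prod_{\alpha=1}^k C(x_\alpha,x_{\sigma(\alpha)}) = \cyp(C)(x_1,\ldots,x_k),
\]
as claimed.

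The only delicate point—and the natural spot to compare with the real case—is accounting for the absence of the $\tfrac12$ prefactor. In the real setting the extra factor arose because each $\varphi_\alpha^2$ contained two indistinguishable copies of $\varphi_\alpha$, producing $2^{k-1}$ equivalent pair-partitions per cyclic $\sigma$ and a compensating $2^{-k}$ from writing the covariance as $C/2$. In the complex setting the symbols $Z_{x_\alpha}$ and $\overline{Z_{x_\alpha}}$ are distinguishable (because $Z$-$Z$ and $\overline{Z}$-$\overline{Z}$ pairings are forbidden), so each permutation $\sigma$ corresponds to a unique surviving pair-partition, and the covariance is simply $C$. Making this combinatorial count explicit is the main bookkeeping obstacle, but once it is settled the identity follows.
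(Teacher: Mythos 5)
Your proposal is correct and follows essentially the same route as the paper: the paper's proof simply says to argue as in Proposition~\ref{prop:cumreal} (Malyshev's formula, vanishing of Gaussian cumulants of order $\geq 3$, restriction to connected pairings giving single-cycle permutations) with $C$ in place of $\nicefrac{C}{2}$. Your explicit bookkeeping — that the vanishing of $\Exp[Z_iZ_j]$ and $\Exp[\overline{Z}_i\overline{Z}_j]$ forces mixed pairings, making each cyclic $\sigma$ correspond to a unique pair-partition and eliminating the factor $\nicefrac12$ — is exactly the detail the paper leaves implicit.
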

\begin{proof}
   The desired formula follows after arguing as in the proof of Proposition \ref{prop:cumreal} and using the covariance matrix $C$ instead of $\nicefrac{C}{2}$.
\end{proof}
Going back to~\eqref{eq:equal_cumulants}, we understand now that, if we want a Gaussian field with cumulants exactly equal to those of the height-one field in the limit, we would need to take a complex version of the DGFF (and possibly sum over $2d$, rather than $d$, directions). However, removing the negative sign in~\eqref{eq:equal_cumulants} seems out of reach at the moment, as the next Remark discusses.
\begin{remark}\label{rmk:cum}
      Consider two distinct random variables $\mathbf X,\mathbf Y$ defined on a common probability space on $\R^d$ such that $\kappa_{\mathbf m}(\mathbf X)=-\kappa_{\mathbf m}(\mathbf Y)$ for any $\mathbf m\in\N^d$. Thus, formally,
      \begin{equation}\label{eq:cumfunction}
          \E\left[ \exp\left({\mathbf t \cdot \mathbf X }\right) \right] = \dfrac{1}{\E\left[ \exp\left({\mathbf t\cdot \mathbf Y}\right) \right]} 
      \end{equation}
      for any $\mathbf t\in\R^d.$
      Indeed, by means of Definition~\ref{def:cum_mult} we can formally write that
      \begin{align*}
      \E\left[\exp\left({ {\mathbf t}\cdot \mathbf X}\right) \right] &= \exp\left(\displaystyle\sum_{\mathbf m\in\N^d} \kappa_{\mathbf m}(\mathbf X) \prod_{j=1}^d \frac{t_j^{m_j}}{m_j!}\right) = \exp\left(-\sum_{\mathbf m\in\N^d} \kappa_{\mathbf m}(\mathbf Y) \prod_{j=1}^d \frac{t_j^{m_j}}{m_j!}\right)\\
      &= \dfrac{1}{\E\left[ \exp\left({\mathbf t\cdot \mathbf Y} \right)\right]}.
      \end{align*}
  \end{remark}
Recalling~\eqref{eq:equal_cumulants}, we are looking for an answer to the question:
\begin{quote}
``Are there random variables that satisfy~\eqref{eq:cumfunction}?''
\end{quote}
An immediate example are the random variables $\mathbf X=\mathbf v$ almost surely and $\mathbf Y=-\mathbf v$ almost surely, with $\mathbf 0\neq \mathbf v\in\R^d$. The question is now whether there are more ``significant'' ones. As one can notice, if such variables exist, one can always construct a probability space in which they are independent, and the above informal argument would go through, yielding the same conclusion on $\mathbf X$ and $\mathbf Y$. Therefore, to find a non-trivial answer, we need to consider the question in a different setup. For this reason, we will now tackle the concept of {\it supersymmetry}, that we present in the next section.
\section{Supersymmetry (SUSY)}\label{sec:susy}
 \subsection{SUSY Gaussians} 
 For the rest of the Subsection, let  $m\coloneqq\#V$ and $n=2m$. We will start working with superspins, namely vectors living in a space $\R^{n|n}$ which is a ``hybrid'' space in which both $n$ standard, real variables and $n$ Grassmannian generators live. More precisely, we define a {\it superspin}, or supervector, as the vector
 \[
 u_i=(x_i,\,y_i,\,\xi_i,\,\bxi_i)^T,\quad i\in V
 \]
 where $x_i,\,y_i\in \R$ and $\xi_i,\,\bxi_i$ are two generators of $\GA$. A smooth superfunction $F\in C^\infty(\R^{n|n})$ can be written as
 \[
 F=\sum_{I\subseteq [n]}f_I(x_I,y_I)\xi_I\bxi_I
 \]
 where $f_I\in C^\infty(\R^n)$ are smooth functions and $\xi_I\bxi_I$ are Grassmann monomials indexed over $I$. The body $F_b$ of a superfunction $F$
is defined as the ordinary smooth function obtained by formally setting all Grassmann variables to zero:
\[
F_b=f_\emptyset(\mathbf{x},\,\mathbf{y}).
\]
The remaining part is referred to as the soul:
\[
F_s=F-F_b=\sum_{\emptyset\neq I\subseteq [n]}f_I(x_I,y_I)\xi_I\bxi_I.
\]
In superspaces, integration works in the following way. 
 \subsection{Superintegration}

The case of $\R^{0|n}$ corresponds to integration in fermionic spaces as we have already seen in Subsection~\ref{subsec:int_fer}. Namely,
 \[
 \int_{\R^{0|n}} F=\int_{\R^{0|n}}\sum_{\emptyset\neq I\subseteq [n]}f_I(x_I,y_I)\xi_I\bxi_I\coloneqq\sum_{\emptyset\neq I\subseteq [n]}f_I(x_I,y_I)\int \Diff\xi_I\bxi_I.
 \]
 On $\R^{n|n}$, Berezin measures are written as
 \[
 \De \mathbf{u}=\De(\mathbf x,\,\mathbf y,\,\boxi,\boxbxi)=\sum_{I\subseteq[n]}\De\nu_I(\mathbf x,\,\mathbf y)\xi_I\bxi_I\Diff
 \]
 where $\De\nu_I$ are measures in $\R^n$. Then
 \[
 \int F\De\mathbf u\coloneqq\sum_I\int_{\R^n}\left(\int \Diff\,F\xi_I\bxi_I\right)\De\nu_I(\mathbf x,\,\mathbf y).
 \]
 For $I=\emptyset$ we take $\De\nu_\emptyset$ to be the Lebesgue measure normalized by $\pi^{n/2}$ on $\R^n$ and $\nu_I=0$ otherwise, obtaining the Berezin-Lebesgue measure on $\R^{n|n}$. From now on, we will denote it simply as $\De \mathbf u$.
 \begin{example}\label{eq:int_one}
     If $n=2$, $a\in \R$ and $F(u)=\exp(-a (x^2+y^2)-a\xi\bxi)$ then
  \[   \int F\De u=1.\]
 \end{example}
 \begin{proof}
 Using the definition of superintegration, we obtain that
  \begin{align*}  \int \e^{-a (x^2+y^2)-a\xi\bxi}&=\int_{\R^2}  \e^{-a (x^2+y^2)}\left(\int\Diff\left(-a\xi\bxi\right)\right)\frac1{\pi} \De x \De y\\
  &=\frac{a}{\pi}\left(\int_{\R} \e^{-a x^2}\De x\right)^2=1. \end{align*}
  This also explains our choice of normalization of the Lebesgue measure on $\R^n$.
 \end{proof}
 The interesting fact is that this integral seems to be independent of $a$, and we would like to generalize this result, as well as using it as motivating example to the phenomenon of supersymmetry.

 Let $A\in M_m(\C)$ be an Hermitian positive-definite matrix. If $x_\alpha=\Re(Z_\alpha),\,y_\alpha=\Im(Z_\alpha) $, we recall the identities
 \[
 \int_{\C^m}\exp\left(-\left(\overline{\bf Z},A{\bf Z}\right)\right)\prod_{\alpha=1}^m\frac{\De x_\alpha\De y_\alpha}{\pi}= \int_{\R^{n}}\exp\left(-\left({\bf x},A{\bf x}\right)-\left({\bf y},A{\bf y}\right)\right)\prod_{\alpha=1}^m\frac{\De x_\alpha\De y_\alpha}{\pi}=\det(A)^{-1}
 \]
 and
  \[
 \int\Diff\e^{\left(\boxbxi,A\boxi\right)}=\det(A).
 \]
 Thus, by letting 
 \[
 {\Diffz }:= \prod_{\alpha=1}^m\frac{\De x_\alpha\De y_\alpha}{\pi}
 \]
 we get, as in Example~\ref{eq:int_one},
 \begin{equation}\label{eq:supint}
\int {\Diff} {\Diffz} \exp\left(-\left(\overline{\mathbf{Z}}, A {\mathbf{Z}}\right) + \left({\boxbxi}, A\boxi\right)\right) = \text{1}.
 \end{equation}
 Note that the integral \eqref{eq:supint} does not depend on the choice of the matrix $A$. 
 
 Define now the {\it super inner product} between two superspins at $i$ and $j\in [n]$ as
 \begin{equation}\label{eq:supinnprod}
    \left(u_i,u_j\right) := x_ix_j + y_iy_j + \frac{1}{2}\left(\xi_j \bxi_i +  \xi_i\bxi_j\right).
 \end{equation}
 Noting that
 \[
 \left(\boxbxi,A\boxi\right)\stackrel{\eqref{eq:AC}}{=}\f12\left(\boxbxi,A\boxi\right) -\f12\left(\boxi,A\boxbxi\right)
 \]
 we deduce that \eqref{eq:supint} becomes 
 \begin{equation}\label{eq:supgaussian}
 \int {\rm d}\mathbf{u} \ \exp\left(- \left( \mathbf{u}, A \mathbf{u} \right)\right) = 1,
 \end{equation}
 where $\mathbf{u}=(u_1,...,u_n)^T$ and 
 \[
 \left(\mathbf{u}, A \mathbf{u} \right)\coloneqq\sum_{i,j=1}^n A(i,\,j)(u_i,\,u_j)
 .\]
 The measure defined in \eqref{eq:supgaussian} is called {\it supergaussian measure}. 

  \subsection{Localisation}
  Let $\GA(\R^{n})$ be the algebra of smooth functions from $\R^{n}$ into $\GA$, which is our usual Grassmannian algebra generated by $\xi_1,...,\xi_m,\bxi_1,...,\bxi_m$. Consider the complex coordinates
  \[
  z \coloneqq x+\iu y, \qquad \bar{z} \coloneqq x-\iu y,
  \]
  and define
  \[
  \dfrac{\partial}{\partial z_i} = \dfrac{1}{2} \left( \dfrac{\partial}{\partial x_i} - \iu\dfrac{\partial}{\partial y_i} \right),
  \qquad \dfrac{\partial}{\partial \bar{z}_i} = \dfrac{1}{2} \left( \dfrac{\partial}{\partial x_i} + \iu\dfrac{\partial}{\partial y_i} \right).
  \]
  The {\it supersimmetry generator} $Q:\GA(\R^{n}) \to \GA(\R^{n})$ is then defined as
  \[
  Q := \sum_{i=1}^n \left( \xi_i\dfrac{\partial}{\partial z_i} + \bxi_i\dfrac{\partial}{\partial \bar{z}_i} 
  - 2 z_i\dfrac{\partial}{\partial \xi_i} + 2 \bar{z}_i\dfrac{\partial}{\partial \bxi_i} \right).
  \]
  A function $F\in\GA(\R^{n})$ is defined to be {\it supersymmetric} or $Q$-closed if $QF=0$, and $Q$-exact if there exists $F'\in\GA(\R^{n})$ such that $QF'=F$.  If we prefer to write the supersymmetry generator in terms of the real vectors $\bf x$ and $\bf y$, we get that
  \[
  Q =\dfrac{1}{\sqrt{\iu}}\sum_{i=1}^n Q_i\coloneqq \dfrac{1}{\sqrt{\iu}}\sum_{i=1}^n \left( \xi_i\dfrac{\partial}{\partial x_i} + \bxi_i\dfrac{\partial}{\partial y_i} 
  - 2 x_i\dfrac{\partial}{\partial \bxi_i} + 2 y_i\dfrac{\partial}{\partial \xi_i} \right).
  \]

  The fundamental property of supersimmetry is given by the following localisation theorem, whose proof can be found in \cite[Theorem 11.4.5]{bauerschmidt2019introduction}.
  \begin{theorem}\label{thm:localisation}
      Let the element $F\in\GA(\R^{n})$ be a smooth integrable supersymmetric form. Then
      \[
      \int {\rm d}\mathbf{u} \,F(\mathbf{u}) = F_b({\bf 0}),
      \]
      where $F_b$ is the body of $F$ evaluated at $\mathbf{0}$.
  \end{theorem}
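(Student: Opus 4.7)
The plan follows the standard supersymmetric-localisation recipe: deform $F$ through a one-parameter family of $Q$-closed superfunctions whose integral is constant in the parameter, then send the parameter to a value where the integrand concentrates at $\mathbf{0}$. The first ingredient is a \emph{fundamental vanishing lemma}: for every sufficiently smooth, rapidly decaying $G \in \GA(\R^n)$,
\[
\int \De \mathbf u \, (QG)(\mathbf u) = 0.
\]
Writing $Q$ as a sum of four terms and using the factorisation of $\De \mathbf u$, the bosonic-derivative pieces $\xi_i \partial_{x_i}$ and $\bxi_i \partial_{y_i}$ reduce, after Berezin integration, to ordinary integrals of total real derivatives and vanish by decay; the fermionic-derivative pieces $-2 x_i \partial_{\bxi_i}$ and $2 y_i \partial_{\xi_i}$ reduce to Berezin integrals of polynomials missing at least one generator, which vanish by rule~\eqref{eq:int_rule}.

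The second ingredient is the deformation. Pick a $Q$-exact superfunction $\Phi = Q K$ whose bosonic body is a positive-definite quadratic form on $\R^n$ vanishing only at $\mathbf{0}$; a suitable $K$ can be built from the super inner product~\eqref{eq:supinnprod}. Since $Q^2 = 0$ and $Q$ is a graded derivation on $\GA(\R^n)$, the family $F_t \coloneqq F \exp(-t \Phi)$ is $Q$-closed for every $t \geq 0$. Differentiating in $t$,
\[
\frac{d}{dt} \int F_t \, \De \mathbf u = -\int F \, \Phi \, e^{-t \Phi} \, \De \mathbf u = -\int Q\bigl( F K e^{-t \Phi} \bigr) \De \mathbf u = 0,
\]
where the middle equality uses $Q F = 0 = Q \Phi$ together with the graded Leibniz rule, and the final equality is the vanishing lemma. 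Hence $\int F \, \De \mathbf u = \lim_{t \to \infty} \int F_t \, \De \mathbf u$.

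The last ingredient is the large-$t$ limit. As $t \to \infty$, the bosonic Gaussian $\exp(-t \Phi_b)$ concentrates at $\mathbf x = \mathbf y = \mathbf{0}$ while the soul $\Phi_s$, being nilpotent, supplies only a finite polynomial expansion. Taylor-expanding $F$ around $\mathbf{0}$ in the real variables and fully in the generators, only the body value $F_b(\mathbf{0})$ survives: higher bosonic Taylor coefficients acquire vanishing $t^{-1/2}$ factors from the Gaussian rescaling, while fermionic monomials that do not saturate the top Berezin form integrate to zero. The residual bosonic Gaussian is already normalised to $1$ by our choice of Lebesgue measure (cf.\ Example~\ref{eq:int_one}), leaving exactly $F_b(\mathbf{0})$.

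The main obstacle is precisely this last step: one has to verify that the bosonic Gaussian concentration and the fermionic nilpotent expansion combine so that every term other than the value of the body at the origin drops out, and that the normalisation constants line up with those built into $\De \mathbf u$. A subsidiary, essentially mechanical, difficulty is checking the graded Leibniz rule and sign conventions used in Step~2, which depend delicately on the right-derivative convention~\eqref{eq:def_der}.
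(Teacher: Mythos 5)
The paper itself does not prove this theorem; it defers to \cite[Theorem 11.4.5]{bauerschmidt2019introduction}, so your proposal must stand on its own. Its skeleton is indeed the standard localisation argument (a vanishing lemma for $\int \De\mathbf u\, QG$, a $Q$-exact deformation $e^{-t\Phi}$, and a $t\to\infty$ concentration step), and the vanishing lemma itself is argued correctly. But there is a concrete error at the pivot of Step 2: for the generator $Q$ defined in this paper one does \emph{not} have $Q^2=0$. From the action recorded in the proof of Lemma~\ref{lem:supersymm_inner} ($Qx_i=\xi_i$, $Q\xi_i=2y_i$, $Qy_i=\bxi_i$, $Q\bxi_i=-2x_i$, up to the constant $\nicefrac{1}{\sqrt{\iu}}$) one gets $Q^2x_i=2y_i\neq 0$: $Q^2$ is the generator of the simultaneous rotation in each $(x_i,y_i)$ and $(\xi_i,\bxi_i)$ plane, and it vanishes only on rotation-invariant forms. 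Consequently ``$\Phi=QK$ is automatically $Q$-closed, hence $Fe^{-t\Phi}$ is $Q$-closed'' is unjustified as written. The fix is available but must be made explicit: take $\Phi=\sum_i(u_i,u_i)$ built from the super inner product \eqref{eq:supinnprod}, whose $Q$-closedness is exactly Lemma~\ref{lem:supersymm_inner}, and separately exhibit a $K$ with $QK=\Phi$ by direct computation (a linear-times-fermion expression in $x_i,y_i,\xi_i,\bxi_i$ works); then $Q\bigl(FKe^{-t\Phi}\bigr)=\pm F\Phi e^{-t\Phi}$ follows from $QF=0$ and $Q\Phi=0$, not from a false nilpotency of $Q$.

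Beyond that, the step you yourself flag as the main obstacle is where the actual proof lives, and it is only sketched: (i) $F$ is assumed merely smooth and integrable, so differentiating $t\mapsto\int F e^{-t\Phi}\De\mathbf u$ under the integral, the boundary-term vanishing in the lemma applied to $FKe^{-t\Phi}$, and continuity at $t=0$ all need dominated-convergence/decay arguments that integrability alone does not obviously supply; (ii) in the $t\to\infty$ limit the powers of $t$ produced by expanding the nilpotent part of $e^{-t\Phi}$ must be shown to cancel exactly against the $t^{-1}$ factors from the bosonic Gaussian normalisation (as in Example~\ref{eq:int_one}), with every mixed term carrying a net negative power of $t$, so that only $F_b(\mathbf 0)$ survives. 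Until the $Q^2$ issue is repaired and this power counting is carried out, the proposal is a correct programme in the spirit of the cited reference rather than a complete proof.
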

We can for example prove that the super inner product is supersymmetric.
\begin{lemma}\label{lem:supersymm_inner}
For all $i,\,j\in [n]$ one has $Q(u_i,\,u_j)=0.$
\end{lemma}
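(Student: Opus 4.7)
The identity $Q(u_i,\,u_j)=0$ is a direct computation that exploits the fact that $Q$, being a sum of odd-times-derivation terms, is a graded odd derivation. The plan is threefold: compute $Q$'s action on each individual generator, decompose $(u_i,\,u_j)$ into four summands, and then apply a (graded) Leibniz rule to each summand and collect terms.

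First, reading off from the explicit formula for $Q$, each of its summands either returns a Grassmann generator (when a $\partial/\partial x_k$ or $\partial/\partial y_k$ hits a coordinate) or returns a multiple of a real coordinate (when $\partial/\partial\xi_k$ or $\partial/\partial\bxi_k$ hits a generator). In particular, up to the overall prefactor $1/\sqrt{\iu}$, the operator $\sum_k Q_k$ acts on generators by $x_k\mapsto \xi_k$, $y_k\mapsto \bxi_k$, $\xi_k\mapsto 2y_k$, $\bxi_k\mapsto -2x_k$. Because these images live in $\R$ or in the span of single generators, the subsequent computation will produce only monomials of the four types $x_a\xi_b$, $x_a\bxi_b$, $y_a\xi_b$, $y_a\bxi_b$.

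Next, I would decompose $(u_i,\,u_j) = x_ix_j + y_iy_j + \f12(\xi_j\bxi_i + \xi_i\bxi_j)$ and apply $\sum_k Q_k$ to each summand. On the bosonic products $x_ix_j$ and $y_iy_j$ the ordinary Leibniz rule gives, after summing over $k$, terms of the form $\xi_i x_j+x_i\xi_j$ and $\bxi_i y_j+y_i\bxi_j$. On the fermionic summands $\xi_j\bxi_i$ and $\xi_i\bxi_j$, the graded Leibniz rule \eqref{def:Leibnizrule} applies; the parity operator $P$ appearing there inserts a sign $(-1)^{p(\xi_j)}=-1$ when the derivation is moved past the odd factor $\xi_j$, and analogously past $\xi_i$. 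The factor $\f12$ out in front of the fermionic part of $(u_i,\,u_j)$ is exactly cancelled by the factor $2$ that the Grassmann-derivative summands of $Q_k$ contribute.

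Finally, one uses that real coordinates commute with Grassmann generators (they are genuine scalars) to put every resulting monomial into a uniform order, say ``real left, Grassmann right''. After this rearrangement, the eight monomials produced from the bosonic and fermionic summands pair off and cancel exactly.

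The main obstacle is purely bookkeeping: applying the graded Leibniz rule correctly to products such as $\xi_j\bxi_i$, so that the computation of $\partial/\partial\bxi_k(\xi_j\bxi_i)$ and $\partial/\partial\xi_k(\xi_j\bxi_i)$ produces the right signs via \eqref{def:Leibnizrule}. Once these signs are tracked, the cancellation falls out without any additional structural input, and the lemma follows.
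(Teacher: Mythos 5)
Your strategy coincides with the paper's (act with $Q$ on the generators, split $(u_i,u_j)$ into bosonic and fermionic summands, apply a Leibniz-type rule, and cancel), but the one step you defer as ``purely bookkeeping'' is exactly where the proposal breaks down, and you never actually display the eight monomials. With the conventions you fix --- the graded rule \eqref{def:Leibnizrule} together with $Q\xi_k=2y_k$, $Q\bxi_k=-2x_k$ --- the cancellation does not occur. Indeed, for the summand $\f12\xi_j\bxi_i$ the term $-2x_i\,\partial/\partial\bxi_i$ of $Q_i$ gives $\partial_{\bxi_i}(\xi_j\bxi_i)=(P\xi_j)\,\partial_{\bxi_i}\bxi_i=-\xi_j$, hence the contribution $-2x_i\cdot\f12\cdot(-\xi_j)=+x_i\xi_j$, which has the \emph{same} sign as the $x_i\xi_j$ coming from $\xi_j\,\partial/\partial x_j$ applied to $x_ix_j$; the $y$--$\bxi$ terms behave identically. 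Carrying out your plan as stated therefore yields $2\left(x_j\xi_i+x_i\xi_j+y_j\bxi_i+y_i\bxi_j\right)$: all eight monomials reinforce, none pair off.

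The paper's computation differs from yours precisely on this sign: it applies $Q_i$ by substituting the generator in place, e.g.\ $\f12\xi_j\bxi_i\mapsto\f12\xi_j(-2x_i)=-x_i\xi_j$, i.e.\ without the extra grading sign, and that is what makes the $x$--$\xi$ terms cancel in its display (for the $y$--$\bxi$ terms to cancel as well one in fact needs $Q\xi_k=-2y_k$, so the signs quoted for the action of $Q$ must in any case be handled with care). The point is that your proposal and the paper disagree on the sign of every fermionic contribution, and only a coherent choice of conventions gives zero: if you insist on the graded Leibniz rule, the cancellation forces $Q\xi_k=-2y_k$, $Q\bxi_k=+2x_k$ (equivalently, the opposite sign on the fermionic half of \eqref{eq:supinnprod}); with the signs you wrote down, the computation does not close. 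So the missing content is not routine: you must write out the eight monomials, commit to one consistent convention for the action of $Q$ and for the Leibniz rule, and verify the cancellation explicitly.
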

\begin{proof}
    Since $Q$ formally exchanges generators as follows:
    \[
    Q x_i=\xi_i,\,Q y_i=\bar\xi_i,\,Q\bar\xi_i=-2 x_i,\,Q\xi_i=2 y_i,
    \]
    we have, up to a multiplicative constant factor $\nicefrac{1}{\sqrt{\iu}}$,
    \begin{align*}
Q(u_i,\,u_j)&=Q_i(u_i,\,u_j)+Q_j(u_i,\,u_j)=\xi _ix_j + \bar \xi_i y_j + \frac{1}{2}\left(-2\xi_j  x_i +  2 y_i\bxi_j\right)\\
&\quad +x_i\xi_j + y_i\bxi_j + \frac{1}{2}\left(2 y_j \bxi_i -2  \xi_i x_j\right)=0.\qedhere
    \end{align*}
\end{proof}
  Thanks to Theorem \ref{thm:localisation} and Lemma~\ref{lem:supersymm_inner} (for example, applying~\cite[Example 11.4.4]{bauerschmidt2019introduction}), one can show that
  \[
  \int {\rm d}\mathbf{u} \ \exp\left(- \left( \mathbf{u}, A \mathbf{u} \right)\right) \exp\left(\left( \tilde{u}, \mathbf{u} \right)\right) = 1,
  \]
  where formally $$\left( \tilde{u}, \mathbf{u} \right)\coloneqq\sum_{i=1}^n (\tilde u_i,\,u_i).$$
This is formally equivalent to
  $$
  \E\left[ \exp\left(\left( \tilde{u}, u \right)\right) \right]_{\hbox{\tiny{SUSY G}}} = 1,
  $$
  where $\E\left[\cdot\right]_{\hbox{\tiny{SUSY G}}}$ means the expectation with respect to the supergaussian measure defined in \eqref{eq:supgaussian}. So, if $\tilde{F}$ resp.\ $\tilde{G}$ is the cumulant generating function of $(\boxx, \boxy)$ resp.\ $(\boxbxi,\boxi)$ as Gaussian measures in their respective ``worlds'', then $\tilde{F}(\cdot)=-\tilde{G}(\cdot)$. Consequently, if $F$ resp.\ $G$ is the moment generating function of $(\boxx, \boxy)$ resp.\ $(\boxbxi,\boxi)$ as Gaussian measures in their respective worlds, then $F(\cdot)=1/G(\cdot)$, which accomplishes the task pointed out in Remark \ref{rmk:cum}.

  We would like to point out that supersymmetry was, in fact, not necessary to explain Remark~\ref{rmk:cum}. However we decided to introduce it here to give a glimpse into its richness and its many possible applications in probability theory.

  \section*{Acknowledgements}The authors would like to thank the organizing committee of the XXVII Brazilian School of Probability for the hospitality and the organization of the conference. We are grateful to all participants who provided us with their feedback and constructive comments. The work of SB was supported by the European Union’s Horizon 2020 research and innovation programme under the Marie Skłodowska-Curie grant agreement no.\ 101034253. SB was further supported through “Gruppo Nazionale per l’Analisi Matematica, la Probabilità e le loro Applicazioni” (GNAMPA-INdAM).
  \includegraphics[height=1em]{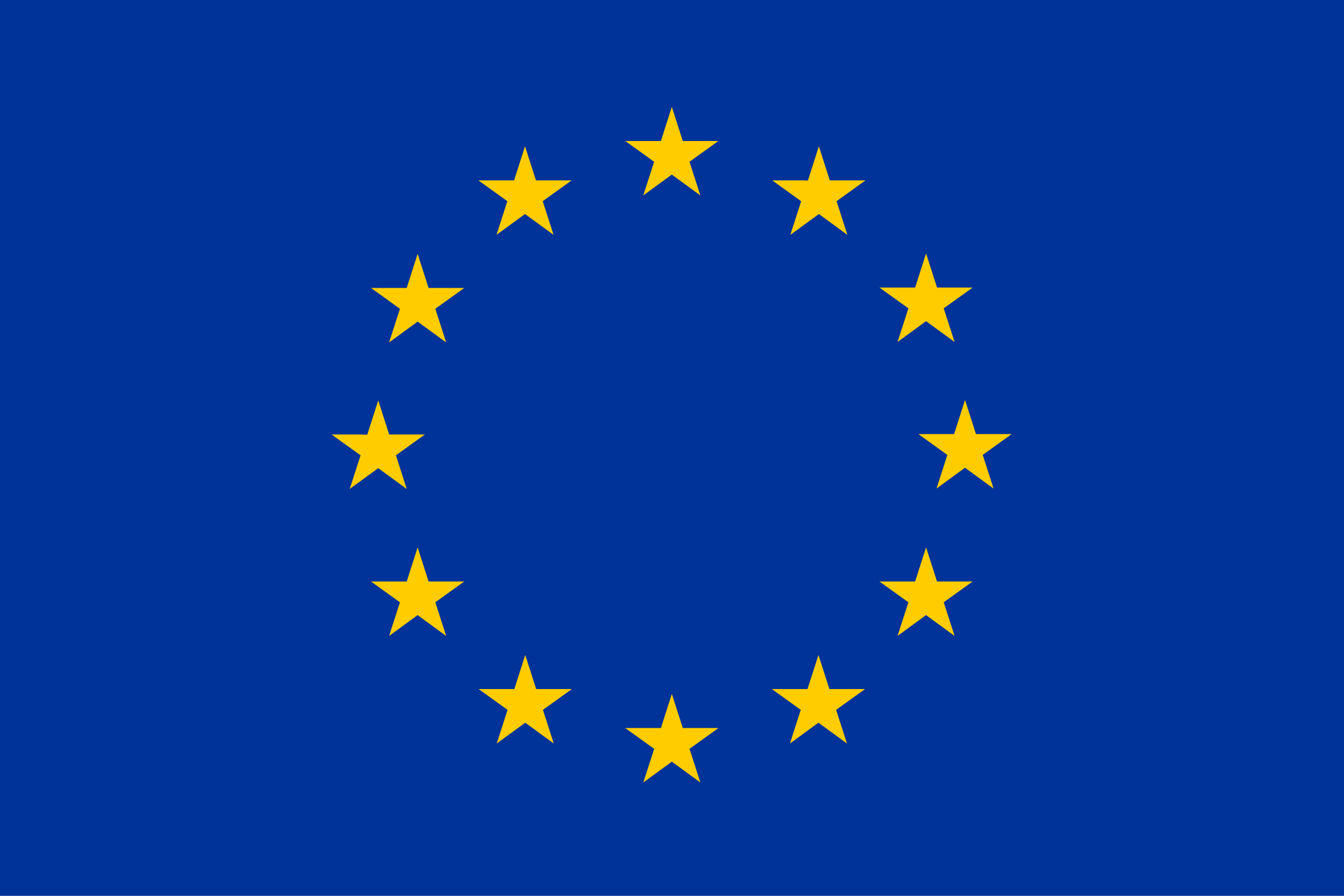}

\printbibliography
\end{document}